\newtheorem{thm}{Theorem}[section]
\newtheorem{cor}[thm]{Corollary}
\newtheorem{lem}[thm]{Lemma}
\newtheorem{prop}[thm]{Proposition}
\newtheorem{quest}[thm]{Question}
\theoremstyle{definition}
\newtheorem{Def}[thm]{Definition}
\newtheorem{rem}[thm]{Remark}
\newtheorem*{ack}{Acknowledgement}
\newtheorem{ex}[thm]{Example}
\numberwithin{equation}{section}
\numberwithin{figure}{section}
\def\Hom{{\text{\rm{Hom}}}}
\def\rchi{{\hbox{\raise1.5pt\hbox{$\chi$}}}}
\def\tensor{\otimes}
\def\a{\alpha}
\def\b{\beta}
\def\lam{\lambda}
\def\gam{\gamma}
\def\Gam{\Gamma}
\def\Vect{{\rm{\bf{Vect}}}}
\newcommand{\bea}{\begin{eqnarray}}
\newcommand{\eea}{\end{eqnarray}}
\newcommand{\be}{\begin{equation}}
\newcommand{\ee}{\end{equation}}
\newcommand{\Mbar}{{\overline{\mathcal{M}}}}
\newcommand{\bP}{{\mathbb{P}}}
\newcommand{\bC}{{\mathbb{C}}}
\newcommand{\bL}{{\mathbb{L}}}
\newcommand{\bQ}{{\mathbb{Q}}}
\newcommand{\bR}{{\mathbb{R}}}
\newcommand{\cA}{{\mathcal{A}}}
\newcommand{\cM}{{\mathcal{M}}}
\newcommand{\cC}{{\mathcal{C}}}
\newcommand{\cF}{{\mathcal{F}}}
\newcommand{\cG}{{\mathcal{G}}}
\newcommand{\cT}{{\mathcal{T}}}
\newcommand{\la}{{\langle}}
\newcommand{\ra}{{\rangle}}
\newcommand{\lrar}{\longrightarrow}
\begin{document}
\large
\setcounter{section}{0}

\allowdisplaybreaks

\title[Edge contraction and 2D TQFT]
{Edge contraction on dual ribbon graphs and 2D TQFT}

\author[O.\ Dumitrescu]{Olivia Dumitrescu}
\address{O.~Dumitrescu: Max-Planck-Institut
f\"ur Mathematik, Bonn, Germany}
\address{Current Address: 
Department of Mathematics\\
Central Michigan University\\
Mount Pleasant, MI 48859}
\email{dumit1om@cmich.edu}
\address{Simion Stoilow Institute of Mathematics\\
Romanian Academy\\
21 Calea Grivitei Street\\
010702 Bucharest, Romania}

\author[M.\ Mulase]{Motohico Mulase}
\address{M.~Mulase:
Department of Mathematics\\
University of California\\
Davis, CA 95616--8633}
\email{mulase@math.ucdavis.edu}
\address{Kavli Institute for Physics and Mathematics of the 
Universe\\
The University of Tokyo\\
Kashiwa, Japan}

\begin{abstract}
We present a new set of 
axioms for  2D TQFT
formulated on the category of   
cell graphs with edge-contraction operations
as morphisms. We construct a functor
from this category to the endofunctor
category consisting of Frobenius algebras. 
Edge-contraction operations
 correspond to natural transformations
of endofunctors, which are compatible with
the Frobenius algebra structure.
Given a Frobenius algebra $A$,
every cell graph 
determines an element of 
the symmetric tensor algebra defined over 
the dual space $A^*$.
We show that the edge-contraction axioms make
this assignment depending  only on the topological 
type of the cell graph,
but not on the graph itself. Thus the functor 
generates the TQFT corresponding to $A$.
\end{abstract}

\subjclass[2010]{Primary: 14N35, 81T45, 14N10;
Secondary: 53D37, 05A15}

\keywords{Topological quantum field 
theory;  Frobenius 
algebras; ribbon graphs; cell graphs}

\maketitle

\tableofcontents

\section{Introduction}
\label{sect:intro}

The purpose of the present paper is  to give 
a new set of axioms for  two-dimensional
topological quantum field theory
(2D TQFT)  formulated
in terms of  dual  ribbon graphs.
The key relations between ribbon graphs are 
\textbf{edge-contraction operations}, which
correspond to the degenerations in
 the moduli space $\Mbar_{g,n}$ of
stable  curves of genus $g$ with $n$
 labeled points that create a rational 
 component with $3$ special points. The 
 structure of Frobenius
 algebra is naturally encoded in the category
  of dual ribbon
  graphs, where edge-contraction
  operations form
  morphisms and 
represent   multiplication 
 and comultiplication operations.

As Grothendieck impressively presents in 
\cite{Gro}, it is a beautiful and simple yet 
 very surprising idea that
 a graph drawn on a compact
topological surface gives an algebraic structure
to the surface. When a positive real 
number is assigned to each edge as its  length,
a unique complex structure of the surface is
determined. This association leads to 
a combinatorial model for the moduli
space $\cM_{g,n}$ of smooth  algebraic
curves of genus $g$ with $n$ marked points
\cite{Harer,  MP1998, Mumford,STT,Strebel}.   
By identifying these graphs
as Feynman diagrams  of  \cite{tH} appearing in the
asymptotic expansion 
of  a particular matrix integral,
and by giving a graph description of 
tautological cotangent classes on $\Mbar_{g,n}$,
Kontsevich \cite{K1992} shows that Witten's generating 
function \cite{W1991} of  intersection numbers 
of these classes satisfies 
the KdV equations. 
Kontsevich's argument is
based on his discovery  that an weighted sum of 
these intersection 
numbers is proportional to the Euclidean volume
of the combinatorial model of $\cM_{g,n}$.

The Euclidean volume 
of $\cM_{g,n}$ depends on the choice of 
the perimeter length of each face of the graph
drawn on a surface. Kontsevich used the 
\emph{Laplace
transform} of the volume as a function
of the perimeter length to obtain a set of relations
among 
intersection numbers of different
values of $(g,n)$. These relations are equivalent
to the conjectured KdV equations. 

Recall that if each edge has an integer length, 
then the resulting Riemann surface
by the Strebel correspondence
\cite{Strebel}  is an algebraic 
curve defined over 
$\overline{\bQ}$ \cite{Belyi, MP1998}. Thus a systematic
counting of curves defined over $\overline{\bQ}$
gives an  approximation 
of the Euclidean volume of Kontsevich
by lattice point counting. Since these
lattice points naturally correspond to the graphs
themselves, the intersection numbers in 
question can be obtained by graph enumeration,
after taking the limit as the mesh length approaches 
to $0$. 
Now we note that
edge-contraction operations give an
 effective tool for graph enumeration problems. 
 Then one can ask: 
 \emph{what information do the
edge-contraction operations tell us about
the intersection numbers?}

We found in \cite{OM4,DMSS,MP2012}
that the Laplace transform of the counting
formula obtained by the edge-contraction
operations on graphs is exactly 
the Virasoro constraint conditions of
\cite{DVV}  for the
intersection numbers. Indeed it gives
the most fundamental example of 
 \emph{topological recursion} of
\cite{EO2007}.

Euclidean volume is naturally
 approximated by 
lattice point counting. It can be also 
approximated as a limit of hyperbolic volume.
The latter idea applied to moduli spaces
of hyperbolic surfaces gives the 
same Virasoro constraint conditions, as beautifully
described in the work of Mirzakhani \cite{Mir1,Mir2}.
Mirzakhani's technique of symplectic and
hyperbolic geometry can be naturally 
extended to \emph{character varieties}
of surface groups. Yet there are no
Virasoro constraints  for this type of moduli 
spaces. 
We ask: \emph{what do edge-contraction
operations give us for the character varieties?}

This is our motivation of the current paper. 
Instead of discussing the application of 
our result to character varieties, which will
be carried out elsewhere, we focus
in this paper our discovery of the relation between 
edge-contraction
operations and
2D TQFT.


A TQFT
 of dimension $d$ is a 
 symmetric monoidal functor $Z$
 from the monoidal category of $(d-1)$-dimensional
 compact oriented topological manifolds, 
 with $d$-dimensional oriented cobordism forming
 morphisms among $(d-1)$-dimensional boundary
 manifolds,
 to the monoidal category of
 finite-dimensional vector spaces defined over a 
 fixed field $K$ \cite{Atiyah, Segal}. 
 Since there is only one compact
 manifold in dimension $1$, a 2D
 TQFT is associated with a unique vector space
 $A =Z(S^1)$, and the Atiyah-Segal axioms of TQFT
 makes $A$ a commutative Frobenius algebra. 
 It has been established
 that  2D TQFTs are classified by finite-dimensional
 Frobenius algebras \cite{Abrams, Dijkgraaf}.
 We ask the following question, in the reverse 
 direction:
 
 \begin{quest}
Suppose we are given a finite-dimensional commutative 
Frobenius algebra. What is the 
 combinatorial realization of the
algebra structure that leads to the corresponding
2D TQFT?
\end{quest}

The answer we propose in this paper is 
the \emph{category of  dual ribbon graphs}, with 
edge-contraction operations as morphisms. 
This category does not carry the information of 
a specific Frobenius algebra. 
In our forthcoming paper, 
we will show that our category 
 generates all
Frobenius objects among any
given monoidal category.

For a given
Frobenius algebra $A$
and  a ribbon graph $\gam_{g,n}$
with $n$ vertices drawn on
a topological surface of genus $g$,
we assign  a 
multilinear map
$$
\gam_{g,n}:A^{\tensor n}\lrar K.
$$
The \emph{edge-contraction axioms} 
of Section~\ref{sect:ECA}
determine the behavior of this map under the change
of ribbon graphs via edge contractions. 
Theorem~\ref{thm:independence}, our main 
result of this paper,
exhibits a surprising statement that the 
map $\gam_{g,n}$ depends only on $g$ and $n$,
and is
 independent of the choice of 
 the graph $\gam_{g,n}$. We
then evaluate $\gam_{g,n}$
for each 
$v_1\tensor\cdots\tensor v_n\in A^{\tensor n}$
and prove 
that this map indeed defines the TQFT corresponding
to $A$.
 
 A \emph{ribbon graph}
 (also called as a dessin d'enfant, fatgraph,
 embedded graph, or a map) is a graph with an assignment of
 a cyclic order of  half-edges incident at each 
 vertex. The cyclic order induces the ribbon 
 structure to the graph, and it becomes 
 the $1$-skeleton of the cell-decomposition of a
compact oriented topological 
surface of genus, say $g$, by 
attaching oriented open discs to the graph. Let 
$n$ be the number of the discs attached. We call
this ribbon graph of \emph{type} $(g,n)$. 

\begin{figure}[htb]
\includegraphics[height=0.7in]{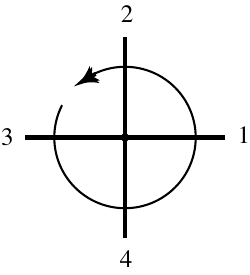}
\hskip0.5in
\includegraphics[height=0.7in]{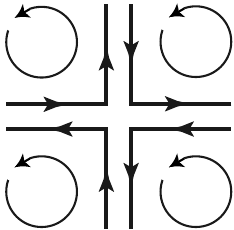}
\\
\medskip
\includegraphics[height=0.6in]{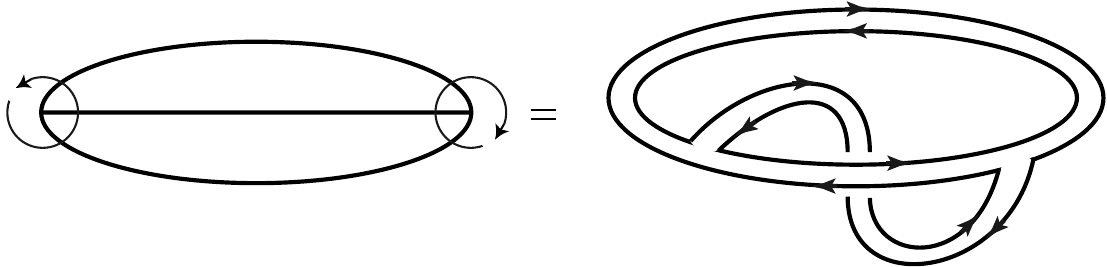}
\\
\includegraphics[height=0.6in]{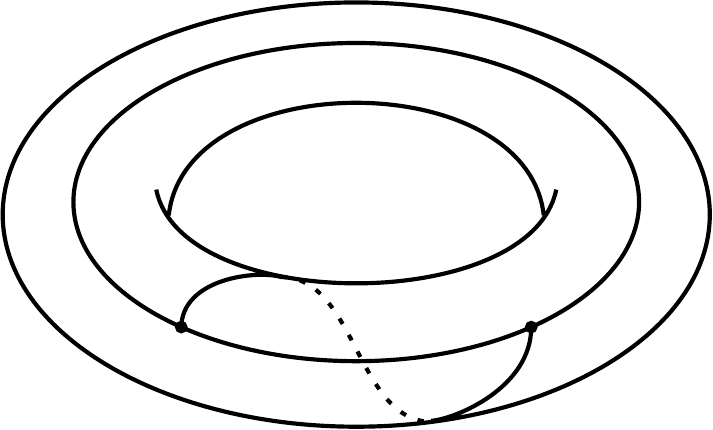}
\caption{Top Row: A cyclic order of half-edges
at a vertex induces a local ribbon structure
to a graph. Second Row:
Globally, a ribbon graph is
the $1$-skeleton of a
cell-decomposition of
a  compact oriented surface.
Third Row: A ribbon graph is thus a graph drawn
on a compact oriented surface.
}
\end{figure}

 An assignment of a positive real number to each
 edge of a ribbon graph determines a concrete
 holomorphic coordinate system of the topological 
 surface of genus $g$ with $n$ labeled marked points
 \cite{MP1998}, thus making it a Riemann surface.
 This construction gives the identification of
 the space of ribbon graphs of type 
 $(g,n)$ with positive  
 edge lengths assigned, and 
 the space $\cM_{g,n}\times \bR_+^n$,
 as an orbifold. The operation of edge-contraction 
 of an edge connecting two distinct vertices then 
 defines the boundary operator, which introduces
 the structure of 
  orbi-cell complex
 on $\cM_{g,n}\times \bR_+^n$. Each ribbon graph determines
the stratum of this cell complex, whose dimension
is the number of edges of the graph.

Since the
ribbon graphs we need for the consideration
of TQFT have \emph{labeled vertices} 
but no labels for faces, we use the terminology
\textbf{cell graph 
of type} $(g,n)$
for a ribbon
graph of genus $g$ with $n$ labeled vertices. 
A cell graph of type $(g,n)$ is the dual of
a ribbon graph of the same type $(g,n)$. 
The set of all cell graphs of type $(g,n)$ is
denoted by $\Gam_{g,n}$. 

Ribbon graphs naturally form orbi-cell complex.
Their dual cell graphs naturally form a category
$\cC\cG$, as we shall define in 
Section~\ref{sect:category}.
We then consider functors
$$
\omega:\cC\cG \lrar
\cF un(\cC/K,\cC/K),
$$
where $(\cC,\tensor,K)$ is a monoidal 
category with the unit object $K$, 
and $\cF un(\cC/K,\cC/K)$ is the endofunctor
category over the category of $K$-objects
of $\cC$. Each cell graph
corresponds to an endofunctor, 
and  edge-contraction operations among them
correspond to natural transformations. 
 Our consideration 
 can be generalized to the cohomological field
 theory of Kontsevich-Manin \cite{KM}. 
 After this generalization, we can construct 
 a functor that gives a classification of 2D TQFT.
 Since we need more preparation,
 these topics will be 
 discussed in our forthcoming paper.

Edge-contraction operations 
 also provide an effective method
for  graph enumeration problems. 
It has been noted in 
\cite{DMSS} that the Laplace transform
of  edge-contraction operations on many
counting problems corresponds
to the topological recursion of
\cite{EO2007}. In
a separate paper
\cite{OM7}, we  give the 
construction of the mirror B-models corresponding
to  the simple
and orbifold Hurwitz numbers, by using only
the edge-contraction operations. 
In general,
enumerative geometry problems, 
such as computation of Gromov-Witten type
invariants, are  solved by 
studying a corresponding problem
on the  \emph{mirror dual} side. 
The effectiveness of the mirror problem  relies
on the technique of complex analysis. 
The  question is:
How do we find the mirror of a given
enumerative problem?
In \cite{OM7},
we give an answer to this question
for a class of graph enumeration problems
that are equivalent to counting of 
orbifold Hurwitz numbers. 
The key is again the same edge-contraction operations. 
The base case, or the case for the ``moduli space''
$\Mbar_{0,1}$, of the edge contraction in the
counting problem identifies the mirror dual
object, and a universal mechanism
of complex analysis, known as the
\textbf{topological recursion} of \cite{EO2007},
solves the B-model side of the counting
problem. The solution is a collection of
generating functions of the original problem
for all genera.

The edge-contraction operation 
causes the degeneration of $\bP^1$ with
one marked point $p$ into two $\bP^1$'s
with one marked point on each, connected
by a $\bP^1$ with $3$ special points, 
two of which are nodal points and
the third one representing the original 
marked point $p$. 
In terms of graph enumeration, the $\bP^1$ with
$3$ special points does not play any role. So we
break the original vertex into two vertices, and 
 separate the graph into two disjoint pieces
(Figure~\ref{fig:01}).

\begin{figure}[htb]
\includegraphics[width=2in]{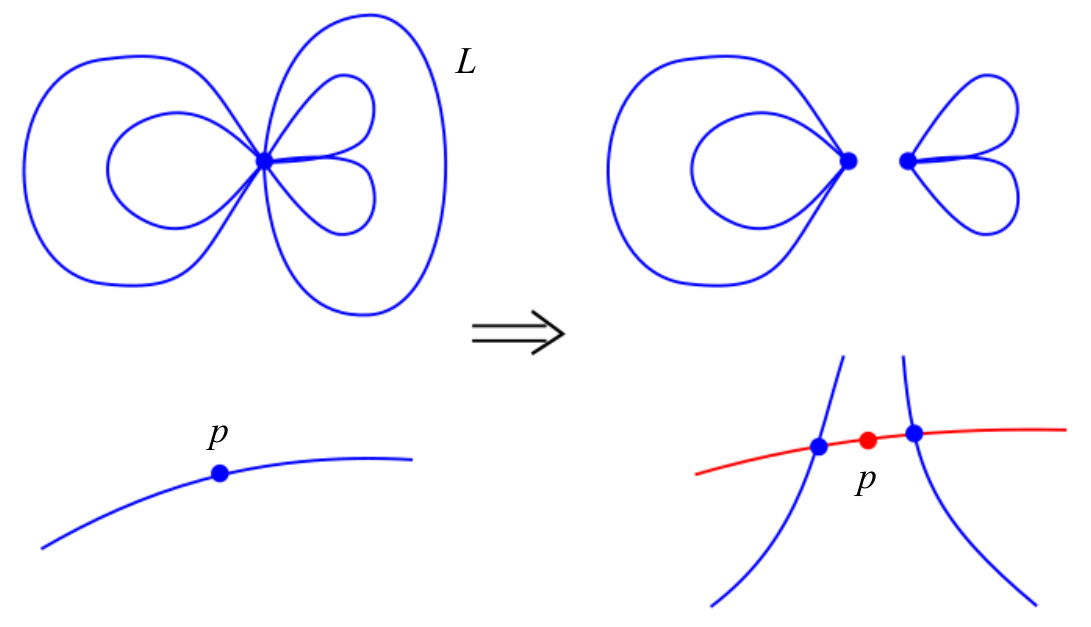}
\caption{The edge-contraction operation 
on a loop is a degeneration process. The graph
on the left is a connected cell graph of type 
$(0,1)$. The edge-contraction on the loop 
$L$ changes it to the one on the right.
Here,
a $\bP^1$ with one marked point $p$
degenerates into two $\bP^1$'s with one
marked point on each, connected by
a $\bP^1$ with $3$ special points.
}
\label{fig:01}
\end{figure}

Once we have our formulation of
2D TQFT and topological recursion in terms
of edge-contraction operations, 
we can consider a TQFT-valued topological 
recursion. An immediate example 
is the Gromov-Witten theory of 
the classifying space $BG$ of a finite group $G$.
In our forthcoming paper, 
we  will show that 
a straightforward generalization of 
the topological recursion 
for differential forms with values in 
tensor products of a Frobenius algebra
 automatically splits into the product
of the usual scalar-valued 
solution to the topological recursion
and a 2D TQFT. Therefore, topological recursion
implies TQFT.
Here, we remark the 
similarity between the topological recursion
 and the comultiplication
operation in a Frobenius algebra. 
Indeed, the topological 
recursion itself can be
regarded as a comultiplication 
formula for an infinite-dimensional
analogue of the Frobenius algebra 
(Vertex algebras, 
or conformal field theory).

The authors have noticed that the topological recursion
appears as the Laplace transform of edge-contraction 
operations in \cite{DMSS}. The geometric
nature of the topological recursion was further
investigated in \cite{OM1,OM2, OM5}, where it
was placed in the context of Hitchin spectral curves
for the first time, and the relation to quantum curves 
was discovered. The present paper is the authors'
first step toward identifying the topological recursion 
in an algebraic and categorical setting.
We note that Hitchin moduli spaces are
diffeomorphic to character varieties of a surface
group. The TQFT point of view of our current
paper in the context of these character varieties,
in particular, their Hodge structures, will 
be discussed elsewhere.

The paper is organized as follows. 
We start with a quick review of Frobenius algebras,
for the purpose of setting notations, in
Section~\ref{sect:Frobenius}. 
We then recall two-dimensional TQFT in
Section~\ref{sect:tqft}.
In  Sections~\ref{sect:ECA},
 we give our formulation
of 2D TQFT in terms of the 
edge-contraction axioms of cell graphs. 
A categorical formulation of our 
axioms is given  in 
Section~\ref{sect:category}.

\section{Frobenius algebras}
\label{sect:Frobenius}

In this paper,
we are concerned with 
finite-dimensional,
unital,   commutative
Frobenius algebras defined over a field 
$K$. 
In this section we review the necessary 
account of 
Frobenius algebra and set  notations.

Let $A$ be a finite-dimensional, unital, associative,
and commutative 
algebra over a field $K$. A non-degenerate
bilinear form $\eta:A\tensor A\lrar K$
is a \emph{Frobenius form} if
\be
\label{Frob}
\eta\big(v_1,m(v_2,v_3)\big) 
= \eta\big(m(v_1,v_2),v_3\big),
\qquad v_1,v_2,v_3 \in A,
\ee
where $m:A\tensor A\lrar A$ is the multiplication.
We denote by 
\be
\label{lam}
\lam:A\overset{\sim}{\lrar}
 A^*, \qquad \la\lam(u),v\ra 
=\eta(u,v),
\ee
 the canonical 
isomorphism of the algebra A and its dual.
We assume that $\eta$ is a symmetric bilinear form.
Let $\mathbf{1}\in A$ denote the multiplicative 
identity. Then it defines a \emph{counit}, or 
a \emph{trace}, by
\be
\label{counit}
\epsilon:A\lrar K, \qquad 
\epsilon(v) = \eta(\mathbf{1},v).
\ee
The canonical isomorphism $\lam$ introduces
a unique cocommutative and coassociative
 coalgebra structure in $A$ by the
following commutative diagram.
\be
\label{coproduct}
\begin{CD}
A@>\delta>>A\tensor A\\
@V{\lam}VV  @VV{\lam\tensor\lam}V\\
A^*@>>{m^*}> A^*\tensor A^*
\end{CD}
\ee
It is often convenient to 
use a basis for calculations.  
Let  $\la e_1,e_2,\dots,e_r\ra$ be a $K$-basis for
$A$.
In terms of this basis, the bilinear form $\eta$ is identified with a 
symmetric matrix, and its
inverse is written as follows:
\be
\label{eta-1}
\eta = [\eta_{ij}], \qquad
\eta_{ij}:=\eta(e_i,e_j),
\qquad \eta^{-1}=[\eta^{ij}].
\ee
The comultiplication is then 
written as
$$
\delta(v) = \sum_{i,j,a,b}
 \eta\big(v,m(e_i,e_j)\big) \eta^{ia}\eta^{jb}
 e_a\tensor e_b.
$$
From now on, if there is no confusion, we denote
simply by $m(u,v) = uv$. 
The symmetric Frobenius form and the commutativity
of the multiplication
makes 
\be
\label{symmetric}
\eta\big(e_{i_1}\cdots
e_{i_{j}},e_{i_{j+1}}\cdots e_n\big)
=\epsilon(e_{i_1}\cdots e_{i_n}),
\qquad 1\le j<n,
\ee
completely symmetric with respect to permutations
of the indices. 

The following  is a standard formula
for a non-degenerate bilinear form:
\be
\label{complete set}
v
= \sum_{a,b} \eta(v,e_a)\eta^{ab}e_b.
\ee
It immediately follows that

\begin{lem}
\label{lem:delta m}
 The following diagram commutes:
\be
\label{delta m}
\xymatrix{
&A\tensor A\tensor  A\ar[dr]^{m\tensor id}&\\
A\tensor A \ar[ur]^{id \tensor \delta}\ar[r]^{\;\;\; m} \ar[dr]_{\delta\tensor id}& 
A \ar[r]^{\delta \;\;\;\;} & A\tensor A .\\
&A\tensor A\tensor A \ar[ur]_{id \tensor m}
		}
\ee
Or equivalently, for every $v_1,v_2$ in A, we have
$$
\delta(v_1v_2) 
= (id\tensor m)\big(\delta(v_1),v_2\big)
= (m\tensor id)\big(v_2,\delta(v_1)\big).
$$
\end{lem}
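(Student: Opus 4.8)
The plan is to collapse the explicit comultiplication formula into a single compact expression and then read off both identities from it, using only the Frobenius property \eqref{Frob}, the commutativity of $m$, and the reconstruction formula \eqref{complete set}. The technical heart is the claim
\[
\delta(v)=\sum_{a,b}\eta^{ab}(ve_a)\tensor e_b .
\]
To establish it I would group the four-fold sum in the displayed formula for $\delta$: for fixed $b$ and $j$ the partial sum $\sum_{i,a}\eta(v,e_ie_j)\eta^{ia}e_a$ equals $ve_j$ by \eqref{complete set}, after first turning $\eta(v,e_ie_j)$ into $\eta(ve_j,e_i)$ by \eqref{Frob}; relabeling $j$ as $a$ then gives the compact form. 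Everything after this point is bookkeeping.

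Granting the compact form, the identity $\delta(v_1v_2)=(m\tensor id)(v_2,\delta(v_1))$ is immediate: multiplying $v_2$ into the first leg of $\delta(v_1)$ produces $\sum_{a,b}\eta^{ab}(v_2v_1e_a)\tensor e_b$, which is $\delta(v_1v_2)$ at once since $v_2v_1e_a=v_1v_2e_a$ by commutativity. So no real work is needed for this half.

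The remaining identity is where the argument actually has content. Using the compact form and commutativity, $(id\tensor m)(\delta(v_1),v_2)=\sum_{a,b}\eta^{ab}(v_1e_a)\tensor(v_2e_b)$, and I must match this against $\delta(v_1v_2)=\sum_{a,b}\eta^{ab}(v_1v_2e_a)\tensor e_b$. The plan is to expand the second leg $v_2e_b$ once more by \eqref{complete set}, rewrite the resulting pairing $\eta(v_2e_b,e_c)$ as $\eta(e_b,v_2e_c)$ using \eqref{Frob} and the symmetry of $\eta$, and then perform the sum over $b$: by the symmetry of $\eta^{-1}$ and \eqref{complete set} this sum is precisely the $e_a$-coordinate of $v_2e_c$, so the two legs recombine as $v_1\cdot(v_2e_c)=v_1v_2e_c$ in the first slot while the second leg becomes $e_d$, reproducing $\delta(v_1v_2)$. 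I expect the \textbf{main obstacle} to be purely clerical: keeping the contracted indices straight through the double application of \eqref{complete set}, where the symmetry of both $\eta$ and $\eta^{-1}$ must be invoked at exactly the right places. A basis-free alternative is to note that the compact form already exhibits $\delta$ as a morphism of $A$-bimodules, whence $\delta(v_1v_2)$ equals $\delta(v_1)$ with $v_2$ absorbed into either tensor factor; but given the explicit setup of this section, the index computation above is the most direct route.
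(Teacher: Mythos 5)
Your proof is correct, but it takes a genuinely different route from the paper's. The paper works on $\delta(v_1v_2)$ directly in the four-index formula: its pivotal step is the splitting $\eta(v_1v_2,e_ie_j)=\eta(v_1e_i,v_2e_j)$ (the Frobenius relation \eqref{Frob} plus commutativity), after which it inserts a resolution of identity via \eqref{complete set}, applies \eqref{Frob} again, and contracts, landing on $(id\tensor m)\big(\delta(v_1),v_2\big)$; the second equality is implicitly left to cocommutativity. You instead first establish the compact form $\delta(v)=\sum_{a,b}\eta^{ab}(ve_a)\tensor e_b$, which the paper never writes down, and your derivation of it (turning $\eta(v,e_ie_j)$ into $\eta(ve_j,e_i)$ by \eqref{Frob} and commutativity, then contracting over $i,a$ by \eqref{complete set}) is sound. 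This reorganization buys you something real: it exhibits $\delta$ as multiplication against the fixed Casimir element $\sum_{a,b}\eta^{ab}e_a\tensor e_b$, so the identity $\delta(v_1v_2)=(m\tensor id)\big(v_2,\delta(v_1)\big)$ becomes a one-line consequence of commutativity rather than a separate computation, and only the $(id\tensor m)$ identity needs an honest index argument; that argument also checks out, since expanding $v_2e_b$ by \eqref{complete set}, transferring $v_2$ across the pairing by \eqref{Frob}, and summing over $b$ does recombine the two legs into $v_1v_2e_c$ in the first slot. The trade-off: the paper's proof is a single uninterrupted chain of equalities for one identity, while yours is slightly longer but proves both stated equalities explicitly and isolates a reusable closed formula for $\delta$ that the paper lacks.
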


\begin{proof}
Noticing the
commutativity and cocommutativity of $A$, we have
\begin{align*}
\delta(v_1v_2)&=\sum_{i,j,a,b}
\eta(v_1v_2,e_ie_j)\eta^{ia}\eta^{jb}e_a\tensor e_b
\\
&=
\sum_{i,j,a,b}
\eta(v_1e_i,v_2 e_j)\eta^{ia}\eta^{jb}e_a\tensor e_b
\\
&=
\sum_{i,j,a,b,c,d}
\eta(v_1e_i,e_c)\eta^{cd}\eta(e_d,v_2 e_j)
\eta^{ia}\eta^{jb} e_a\tensor e_b
\\
&=
\sum_{i,j,a,b,c,d}
\eta(v_1,e_ie_c)\eta^{cd}\eta^{ia}\eta(e_d v_2, e_j)
\eta^{jb} e_a\tensor e_b
\\
&=\sum_{i,a,c,d}\eta(v_1,e_ie_c)\eta^{cd}\eta^{ia}
e_a\tensor (e_d v_2)
\\
&=(id\tensor m)\big(\delta(v_1),v_2).
\end{align*}
\end{proof}

In the lemma above we consider the composition
$\delta\circ m$. The other order of operations plays
an essential role in 2D TQFT.

\begin{Def}[Euler element]
The \textbf{Euler element} of a Frobenius 
algebra $A$ is defined by
\be\label{Euler}
\mathbf{e}:= m\circ \delta(\mathbf{1}).
\ee
In terms of basis, the Euler element is given by
\be\label{Euler basis}
\mathbf{e} = \sum_{a,b}\eta^{ab} e_ae_b.
\ee
\end{Def}

Another application of \eqref{complete set}
is the following formula that relates 
the multiplication and comultiplication.

\be\label{prod=coprod}
\left(\lam(v_1)\tensor id\right)\delta(v_2)
= v_1v_2.
\ee
This is because
\begin{align*}
\left(\lam(v_1)\tensor id\right)\delta(v_2)
&=
\sum_{a,b,k,\ell}
\left(\lam(v_1)\tensor id\right)
\eta(v_2,e_ke_\ell)\eta^{ka}\eta^{\ell b}
e_a\tensor e_b
\\
&=
\sum_{a,b,k,\ell}
\eta(v_2e_\ell ,e_k)\eta^{ka}
\eta(v_1,e_a) \eta^{\ell b} e_b
\\
&=
\sum_{b,\ell}
\eta(v_1,v_2e_\ell) \eta^{\ell b} e_b
=v_1v_2.
\end{align*}

\section{2D TQFT}
\label{sect:tqft}

The axiomatic formulation of conformal and 
topological quantum field theories was established 
in 1980s. We refer to Atiyah \cite{Atiyah} and
Segal \cite{Segal}. We  consider only
two-dimensional topological quantum field theories
in this paper.
Again for the purpose of setting notations, 
we provide a brief review of the subject in this 
section. We refer to  fundamental literature,
such as \cite{Kock, Teleman}, for more detail of 2D TQFT. 

A 2D TQFT is a symmetric
monoidal functor $Z$ from the cobordism 
category of oriented surfaces (a surface being a 
cobordism of its boundary circles) to 
the monoidal category of finite-dimensional 
vector spaces over a fixed field
$K$ with the operation of tensor products.
The Atiyah-Segal TQFT axioms
automatically make
the vector space 
\be
\label{ZS1}
Z(S^1) = A
\ee
 a unital commutative 
Frobenius algebra over $K$. 

Let $\Sigma_{g,n}$ be an oriented surface of finite
topological type $(g,n)$, i.e., a surface obtained
by removing $n$ disjoint open discs from a 
compact
oriented two-dimensional topological manifold 
of genus $g$. The boundary
components are labeled by 
indices $1, \dots,n$. We always give the induced 
orientation at each boundary circle. 
The TQFT  then assigns to such 
a surface a multilinear map
\be\label{ZSigma}
\Omega_{g,n}\overset{\text{def}}{=}Z(\Sigma_{g,n}): A^{\tensor n}\lrar K.
\ee
If we change the orientation at the $i$-th boundary,
then the $i$-th factor of the tensor product is changed
to the dual space $A^*$.  
Therefore, if we have $k$ boundary circles with
induced orientation and $\ell$ circles
with opposite orientation, then we have
a multi-linear map
$$
\Omega_{g,k,\bar{\ell}}:A^{\tensor k}\lrar
A^{\tensor \ell}.
$$
The sewing axiom of Atiyah \cite{Atiyah}
requires that
$$
\Omega_{g_2,\ell,\bar{n}}
\circ 
\Omega_{g_1,k,\bar{\ell}}
=
\Omega_{g_1+g_2+ \ell-1,k,\bar{n}}
:A^{\tensor k}\lrar A^{\tensor n}.
$$

\begin{figure}[htb]
\label{fig:sewing}
\includegraphics[width=3in]{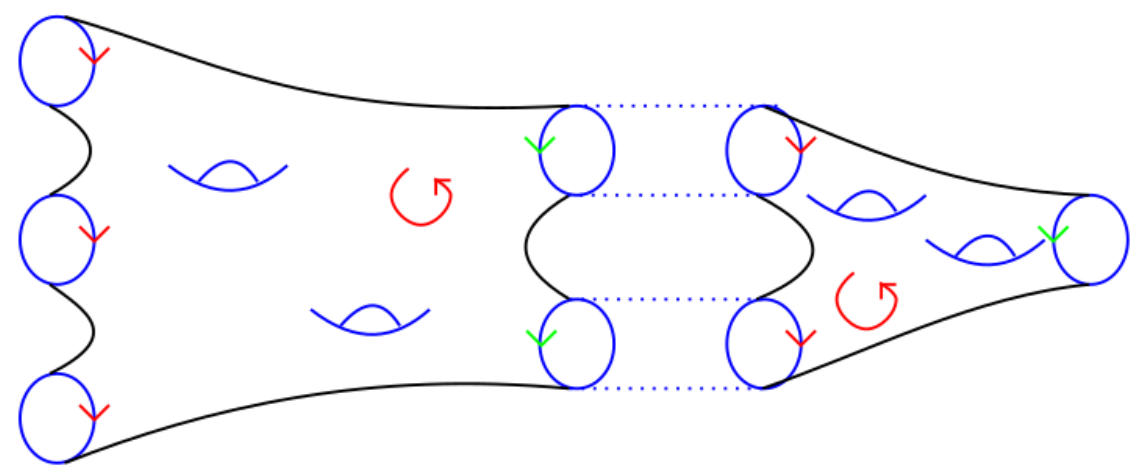}
\caption{}
\end{figure}

A 2D TQFT can be also  obtained as a special case of 
a CohFT of \cite{KM}. 

\begin{Def}[Cohomological Field Theory]
We denote by $\Mbar_{g,n}$ the moduli space
of stable curves of genus $g\ge 0$ and $n\ge 1$ smooth
marked points subject to the stability condition
$2g-2+n>0$. 
Let
\be\label{pi}
\pi:\Mbar_{g,n+1}\lrar \Mbar_{g,n}
\ee
be the forgetful morphism of the last marked point,
and 
\begin{align}
\label{gl1}
&gl_1:\Mbar_{g-1,n+2}\lrar \Mbar_{g,n}
\\
\label{gl2}
&gl_2:\Mbar_{g_1,n_1+1}\times
 \Mbar_{g_2,n_2+1}\lrar \Mbar_{g_1+g_2,n_1+n_2}
\end{align}
the gluing morphisms that give boundary strata of 
the moduli space. An assignment
\be\label{CohFT}
\Omega_{g,n}:A^{\tensor n}\lrar 
H^*(\Mbar_{g,n},K)
\ee
is a CohFT if the following axioms hold:
\begin{align*}
&\textbf{CohFT 0:}\quad \Omega_{g,n} \text{ is 
$S_n$-invariant, i.e., symmetric, and }
\Omega_{0,3}(\mathbf{1},v_1,v_2) = \eta(v_1,v_2).
\\
&\textbf{CohFT 1:} \quad
\Omega_{g,n+1}(v_1,\dots,v_n,\mathbf{1}) =
 \pi^*\Omega_{g,n}(v_1,\dots,v_n).
 \\
&\textbf{CohFT 2:}\quad
gl_1 ^*\Omega_{g,n}(v_1,\dots,v_n)
=\sum_{a,b}\Omega_{g-1,n+2}
(v_1,\dots,v_n,e_a,e_b)\eta^{ab}.
\\
&\textbf{CohFT 3:}\quad
gl_2^*\Omega_{g_1+g_2,|I|+|J|}(v_I,v_J)
=\sum_{a,b}\Omega_{g_1,|I|+1}(v_I,e_a)
\Omega_{g_2,|J|+1}(v_J,e_b)\eta^{ab},
\end{align*}
where $I\sqcup J = \{1,\dots,n\}$. 
\end{Def}

If 
a CohFT takes values  in 
$H^0(\Mbar_{g,n},K) = K$, then it  is a 2D TQFT. 
In what follows, we only consider 
CohFT with values in $H^0(\Mbar_{g,n},K)$.

\begin{rem}
The forgetful morphism makes sense for a 
stable pointed curve, but it does not exist
for a topological surface with boundary
in the same way. Certainly
we cannot just \emph{forget} a boundary. 
For a TQFT, eliminating a boundary corresponds
to capping a disc. In algebraic geometry language,
it is the same as gluing a component of 
$g=0$ and $n=1$. 
Since $H^0(\Mbar_{g,n},K) = K$ is not
affected by the morphism \eqref{pi}-\eqref{gl2},
the equation
$$
\Omega_{g,n}(\mathbf{1},v_2,\dots,v_n) =
 \Omega_{g,n-1}(v_2,\dots,v_n)
$$
is identified with CohFT 3 for $g_2=0$
and $J=\emptyset$,
if we define
\be\label{CohFT01}
\Omega_{0,1}(v):=\epsilon(v) = \eta(\mathbf{1},v),
\ee
even though $\Mbar_{0,1}$ does not exist.
We then have
\begin{align*}
\Omega_{g,n}(v_1,\dots,v_n) 
&=
\sum_{a,b}\Omega_{g,n+1}(v_1,\dots,v_n,e_a)
\eta(\mathbf{1},e_b)\eta^{ab}
\\
&=\Omega_{g,n+1}(v_1,\dots,v_n,\mathbf{1})
\end{align*}
by \eqref{complete set}.
In other words, the  isomorphism
of the degree $0$ cohomologies
\be\label{pi on 0}
\pi^*:H^0(\Mbar_{g,n},K)
\lrar H^0(\Mbar_{g,n+1},K)
\ee
is replaced by its left inverse
\be\label{sigma on 0}
\sigma_i ^*:H^0(\Mbar_{g,n+1},K)
\lrar H^0(\Mbar_{g,n},K),
\ee
where 
\be\label{sigma}
\sigma_i:\Mbar_{g,n}\lrar \Mbar_{g,n+1}
\ee
is one of the $n$ tautological sections.
Of course this consideration does not 
apply for CohFT. 
\end{rem}

\begin{rem}
In the same spirit, although $\Mbar_{0,2}$ does not
exist either, we  can \emph{define}
\be\label{CohFT02}
\Omega_{0,2}(v_1,v_2):=\eta(v_1,v_2)
\ee
so that we  exhaust all  cases appearing in the
Atiyah-Segal axioms for 2D TQFT. In particular,
for $g_2=0$ and $J=\{n\}$, we have
\begin{align*}
\Omega_{g,n}(v_1,\dots,v_n)
&=\Omega_{g,n}\left(
v_1,\dots,v_{n-1},\sum_{a,b}
\eta(v_n,e_b)\eta^{ab}e_a
\right)
\\
&=
\sum_{a,b}\Omega_{g,n}(v_1,\dots, v_{n-1},
e_a) \Omega_{0,2}(v_n,e_b)\eta^{ab}.
\end{align*}
Thus $\Omega_{0,2}(v_1,v_2)$
functions as the identity operator of the 
Atiyah-Segal axiom \cite{Atiyah}.
\end{rem}

\begin{rem}
A marked point $p_i$ of a stable curve $\Sigma
\in \Mbar_{g,n}$ is an insertion point for the
cotangent class $\psi_i = c_1(\bL_i)$, where
$\bL_i$ is the pull-back of the relative
canonical sheaf  on the universal
curve $\pi:\Mbar_{g,n+1}\lrar \Mbar_{g,n}$
by the $i$-th tautological section $\sigma_i:
\Mbar_{g,n}\lrar \Mbar_{g,n+1}$.
If we cut a small disc around $p_i\in \Sigma$,
then the orientation induced on the boundary
circle is consistent with the orientation of the 
unit circle in $T^*_{p_i}\Sigma$.
This orientation is opposite to the orientation
that is 
naturally induced on $T_{p_i}\Sigma$. 
In general, if $V$ is an oriented real vector 
space of dimension $n$, then $V^*$ naturally
acquires the opposite orientation with respect to
the dual basis if $n\equiv 2, 3 \mod 4$.
\end{rem}

As we have noted, in terms of
{sewing axioms}, if 
a boundary circle on 
a topological surface $\Sigma$ of type $(g,n)$
is oriented according to the induced orientation,
then this is an \emph{input} circle to which we
assign an element of $A$. If a boundary circle
is oppositely oriented, then it is an \emph{output}
circle and $\Sigma$ produces an output 
element at this 
boundary. Thus if $\Sigma_1$ has an input
circle and $\Sigma_2$ an output circle, then
we can sew the two surfaces
together along the circle
to form a connected sum $\Sigma_1\#
\Sigma_2$, where the output
from $\Sigma_2$ is placed as  input
 for $\Sigma_1$.

\begin{prop}
The genus $0$ values of a 2D TQFT is given by
\be\label{TQFT 0n}
\Omega_{0,n}(v_1,\dots,v_n) = 
\epsilon(v_1\cdots v_n),
\ee
provided that we \emph{define}
\be\label{TQFT 03}
\Omega_{0,3}(v_1,v_2,v_3):= \epsilon(v_1v_2v_3).
\ee
\end{prop}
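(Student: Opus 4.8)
The plan is to prove $\Omega_{0,n}(v_1,\dots,v_n) = \epsilon(v_1\cdots v_n)$ by induction on $n$, using the sewing/gluing axiom CohFT 3 to reduce the number of marked points by one at each step, with the product structure of $A$ accumulating in the counit. The base case is furnished by the definition: for $n=3$ we take $\Omega_{0,3}(v_1,v_2,v_3) := \epsilon(v_1v_2v_3)$ as stipulated in \eqref{TQFT 03}, and for the degenerate values $n=1,2$ we use $\Omega_{0,1}(v)=\epsilon(v)$ from \eqref{CohFT01} and $\Omega_{0,2}(v_1,v_2)=\eta(v_1,v_2)=\epsilon(v_1v_2)$ from \eqref{CohFT02}, which already agree with the claimed formula since $\eta(v_1,v_2)=\eta(\mathbf{1},v_1v_2)=\epsilon(v_1v_2)$.

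For the inductive step, suppose the formula holds for $n$ marked points; I would compute $\Omega_{0,n+1}(v_1,\dots,v_{n+1})$ by writing the genus-$0$ surface of type $(0,n+1)$ as a gluing of a pair of pants $(0,3)$ with a surface of type $(0,n)$. Concretely, applying CohFT 3 with $g_1=g_2=0$, with one factor carrying $v_n,v_{n+1}$ and a glued leg, gives
\[
\Omega_{0,n+1}(v_1,\dots,v_{n+1})
=\sum_{a,b}\Omega_{0,n}(v_1,\dots,v_{n-1},e_a)\,
\Omega_{0,3}(v_n,v_{n+1},e_b)\,\eta^{ab}.
\]
By the induction hypothesis and the definition \eqref{TQFT 03}, the right-hand side equals
\[
\sum_{a,b}\epsilon(v_1\cdots v_{n-1}e_a)\,
\epsilon(v_nv_{n+1}e_b)\,\eta^{ab}.
\]
Now I would collapse the sum using the completeness relation \eqref{complete set}. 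Writing $w:=v_nv_{n+1}$ and using $\epsilon(ue_b)=\eta(\mathbf{1},ue_b)=\eta(u,e_b)$ together with the symmetry \eqref{symmetric}, the factor $\sum_b \epsilon(v_nv_{n+1}e_b)\eta^{ab}$ becomes the $e_a$-coefficient expansion of $w$, so that $\sum_{a,b}\epsilon(v_1\cdots v_{n-1}e_a)\eta^{ab}\eta(w,e_b)$ telescopes to $\epsilon(v_1\cdots v_{n-1}w)=\epsilon(v_1\cdots v_{n+1})$.

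The main obstacle I anticipate is purely bookkeeping rather than conceptual: one must verify that \eqref{complete set} applies in the form $\sum_{a,b}\eta(w,e_b)\eta^{ab}e_a=w$ inside the trilinear/multilinear expression, and that the commutativity and the total symmetry \eqref{symmetric} of $\epsilon$ on products justify moving $w$ into the single counit. Care is needed to confirm CohFT 3 is being invoked with the correct index partition $I\sqcup J=\{1,\dots,n+1\}$ and that the degenerate cases $\Omega_{0,1},\Omega_{0,2}$ are consistent as anchors of the induction; once the contraction identity $\sum_{a,b}\epsilon(P\,e_a)\,\eta^{ab}\,\eta(w,e_b)=\epsilon(Pw)$ is isolated, the rest is a routine application of the Frobenius formalism already developed in Section~\ref{sect:Frobenius}.
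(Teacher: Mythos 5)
Your proposal is correct and follows essentially the same route as the paper: the paper's proof is exactly ``a direct consequence of CohFT 3 and \eqref{complete set},'' and your induction on $n$, gluing off a $(0,3)$-piece via CohFT 3 and collapsing $\sum_{a,b}\eta(P,e_a)\eta^{ab}\eta(w,e_b)=\eta(P,w)=\epsilon(Pw)$ with the completeness relation, is just the spelled-out version of that argument.
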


\begin{proof}
This is a direct consequence of CohFT 3 and
\eqref{complete set}.
\end{proof}

One of the original motivations of TQFT
\cite{Atiyah, Segal} is to identify the
\emph{topological invariant} $Z(\Sigma)$
of a closed 
manifold $\Sigma$. In our current setting, 
it is defined as 
\be\label{g-invariant}
Z(\Sigma_g):= \epsilon\big(
\lam^{-1}(\Omega_{g,1})\big)
\ee
for a closed oriented surface $\Sigma_g$ of genus
$g$. Here, $\Omega_{g,1}:A\lrar K$ is 
an element of
$A^*$, and $\lam:A\overset{\sim}{\lrar} A^*$
is the canonical isomorphism.

\begin{prop}
\label{prop:g-invariant}
The topological invariant $Z(\Sigma_g)$ of
\eqref{g-invariant} is given by
\be\label{g-invariant formula}
Z(\Sigma_g) = \epsilon(\mathbf{e}^g),
\ee
where $\mathbf{e}^g\in A$ represents the $g$-th power of the Euler element
of \eqref{Euler}.
\end{prop}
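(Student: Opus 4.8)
The plan is to establish a closed form for all genus-$g$ values $\Omega_{g,n}$ first, and only then specialize to $n=1$ and invert $\lam$. Concretely, I claim that for every $g\ge 0$ and every $n\ge 1$,
\[
\Omega_{g,n}(v_1,\dots,v_n)=\epsilon\big(\mathbf{e}^g\,v_1\cdots v_n\big),
\]
and I would prove this by induction on $g$, carrying \emph{all} $n$ through the induction simultaneously. The base case $g=0$ is exactly the genus-$0$ formula \eqref{TQFT 0n}, since $\epsilon(v_1\cdots v_n)=\epsilon(\mathbf{e}^0 v_1\cdots v_n)$.

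For the inductive step I would apply the handle-gluing axiom CohFT 2 with the inputs $v_1,\dots,v_n$ carried along, giving
\[
\Omega_{g,n}(v_1,\dots,v_n)=\sum_{a,b}\Omega_{g-1,n+2}(v_1,\dots,v_n,e_a,e_b)\,\eta^{ab}.
\]
Invoking the inductive hypothesis at genus $g-1$ with $n+2$ marked points rewrites the summand as $\epsilon(\mathbf{e}^{g-1}v_1\cdots v_n\,e_a e_b)\,\eta^{ab}$; pulling $\epsilon$ and the product out by linearity and recognizing $\sum_{a,b}\eta^{ab}e_a e_b=\mathbf{e}$ from \eqref{Euler basis} collapses the sum to $\epsilon(\mathbf{e}^{g-1}v_1\cdots v_n\cdot\mathbf{e})=\epsilon(\mathbf{e}^g v_1\cdots v_n)$, closing the induction.

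Specializing to $n=1$ yields $\Omega_{g,1}(v)=\epsilon(\mathbf{e}^g v)=\eta(\mathbf{e}^g,v)$ for all $v\in A$, where the last equality is the identity $\epsilon(uw)=\eta(u,w)$ coming from the Frobenius property \eqref{Frob} together with the definition \eqref{counit} of $\epsilon$. By the definition \eqref{lam} of $\lam$ and the non-degeneracy of $\eta$, this says precisely that $\lam^{-1}(\Omega_{g,1})=\mathbf{e}^g$. Substituting into the definition \eqref{g-invariant} of the invariant then gives $Z(\Sigma_g)=\epsilon\big(\lam^{-1}(\Omega_{g,1})\big)=\epsilon(\mathbf{e}^g)$, which is \eqref{g-invariant formula}.

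The only genuine subtlety—and the reason the argument must be organized this way—is that CohFT 2 raises the number of marked points by two while lowering the genus by one, so a naive induction on $g$ at fixed $n$ does not close. I would therefore track the full $n$-dependence throughout; the pleasant bookkeeping is that each descent in genus contributes exactly one factor of the Euler element $\mathbf{e}$, so after $g$ steps down to genus $0$ one accumulates precisely $\mathbf{e}^g$. The final inversion of $\lam$ is immediate once $\Omega_{g,1}$ has been written in the form $\eta(\mathbf{e}^g,v)$, so I do not anticipate any real obstacle beyond choosing the correct statement to induct on.
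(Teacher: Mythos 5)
Your proof is correct, but it takes a genuinely different route from the paper's. The paper fixes $n=1$ and inducts on $g$: applying CohFT 2 gives $\Omega_{g,1}(v)=\sum_{a,b}\Omega_{g-1,3}(v,e_a,e_b)\eta^{ab}$, and then CohFT 3 is invoked \emph{inside the inductive step} to split off a genus-zero four-point factor $\Omega_{0,4}$, which is evaluated by \eqref{TQFT 0n} and contracted via \eqref{complete set} to produce the one-handle recursion $\Omega_{g,1}(v)=\Omega_{g-1,1}(v\mathbf{e})$; iterating down to the Lemma's case $g=1$ yields $\eta(v,\mathbf{e}^g)$. You instead strengthen the inductive statement to all $n$ simultaneously, so that after CohFT 2 raises the number of insertions to $n+2$, the inductive hypothesis absorbs them, and only linearity plus \eqref{Euler basis} is needed to close the step; CohFT 3 enters your argument only through the base case \eqref{TQFT 0n}. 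Your organization is leaner in the inductive step and establishes the general formula \eqref{TQFT gn} in one stroke, so the theorem the paper states and proves separately right after Proposition~\ref{prop:g-invariant} (by the same technique, "the argument is the same") becomes an immediate byproduct rather than a second argument. What the paper's route buys is the explicit recursion $\Omega_{g,1}(v)=\Omega_{g-1,1}(v\mathbf{e})$, which directly reflects the geometric picture recorded after the proof: a closed genus-$g$ surface is sewn from $g$ genus-one pieces, each contributing one factor of $\mathbf{e}$. One small caveat on your closing remark: an induction on $g$ at fixed $n$ fails to close only if one restricts to CohFT 2 alone; the paper does close it at fixed $n=1$ precisely by additionally using CohFT 3 to trade the two extra insertions for a multiplication by $\mathbf{e}$, so the choice between the two organizations is a matter of which axiom carries the load, not of necessity.
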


\begin{lem}
We have
\be\label{11=Euler}
\mathbf{e} := m\circ \delta(1) 
= \lam^{-1}(\Omega_{1,1}).
\ee
\end{lem}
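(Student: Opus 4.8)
The plan is to establish the identity by testing both sides against the Frobenius form $\eta$. Since $\lam$ is the isomorphism characterized by $\la\lam(u),v\ra = \eta(u,v)$, the claim $\mathbf{e} = \lam^{-1}(\Omega_{1,1})$ is equivalent to $\lam(\mathbf{e}) = \Omega_{1,1}$ as elements of $A^*$, that is, to the scalar identity $\eta(\mathbf{e},v) = \Omega_{1,1}(v)$ for all $v\in A$. So it suffices to compute $\Omega_{1,1}(v)$ from the axioms and match it with $\eta(\mathbf{e},v)$.

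First I would evaluate $\Omega_{1,1}$ using the genus-lowering axiom. The type $(g,n)=(1,1)$ is stable, and the gluing morphism \eqref{gl1} specializes to $gl_1:\Mbar_{0,3}\lrar\Mbar_{1,1}$. On the degree-zero cohomology $H^0(\Mbar_{g,n},K)=K$ the pullback $gl_1^*$ is the identity, so CohFT 2 collapses to the numerical relation
$$\Omega_{1,1}(v) = \sum_{a,b}\Omega_{0,3}(v,e_a,e_b)\,\eta^{ab}.$$
Substituting the genus-zero formula $\Omega_{0,3}(v_1,v_2,v_3)=\epsilon(v_1v_2v_3)$ from the preceding Proposition, factoring $v$ out of the counit by linearity, and recognizing the Euler element from its basis expression \eqref{Euler basis}, this becomes
$$\Omega_{1,1}(v) = \sum_{a,b}\epsilon(v\,e_ae_b)\,\eta^{ab} = \epsilon\!\left(v\sum_{a,b}\eta^{ab}e_ae_b\right) = \epsilon(v\,\mathbf{e}).$$

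It then remains to identify $\epsilon(v\,\mathbf{e})$ with $\eta(\mathbf{e},v)$. Unwinding the counit by \eqref{counit} and applying the Frobenius property \eqref{Frob} together with the commutativity of $m$ and symmetry of $\eta$ gives
$$\epsilon(v\,\mathbf{e}) = \eta(\mathbf{1},v\,\mathbf{e}) = \eta(v,\mathbf{e}) = \eta(\mathbf{e},v) = \la\lam(\mathbf{e}),v\ra.$$
Since $v$ is arbitrary, $\Omega_{1,1}=\lam(\mathbf{e})$, which is the assertion. I expect no real difficulty here; the one point that must be handled with care is the interpretation of CohFT 2 restricted to $H^0$, namely that $gl_1^*$ acts as the identity scalar so the axiom reduces to a numerical equation rather than a relation in positive-degree cohomology. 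Once that is granted, the remainder is a formal manipulation with the Frobenius form.
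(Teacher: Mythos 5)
Your proposal is correct and follows essentially the same route as the paper's own proof: both evaluate $\Omega_{1,1}(v)$ via CohFT~2 specialized to $gl_1:\Mbar_{0,3}\to\Mbar_{1,1}$, obtaining $\Omega_{1,1}(v)=\sum_{a,b}\Omega_{0,3}(v,e_a,e_b)\eta^{ab}=\eta(v,\mathbf{e})$ and then invoking the definition of $\lam$. Your write-up merely makes explicit two steps the paper leaves tacit, namely that $gl_1^*$ acts as the identity on $H^0$ and the final symmetry manipulation $\epsilon(v\,\mathbf{e})=\eta(\mathbf{e},v)$, so no substantive difference remains.
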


\begin{proof}
This follows from
$$
\Omega_{1,1}(v) =
\sum_{a,b} \Omega_{0,3}(v,e_a,e_b)\eta^{ab}
=
 \sum_{a,b}\eta(v,e_ae_b)
\eta^{ab}=\eta(v,\mathbf{e})
$$
for every $v\in A$. 
\end{proof}

\begin{proof}[Proof of 
Proposition~\ref{prop:g-invariant}] Since the starting
 case  $g=1$ follows from the above Lemma,
we prove the formula by induction, which goes 
as follows:
\begin{align*}
\Omega_{g,1}(v) 
&= 
\sum_{a,b}
\Omega_{g-1,3}(v,e_a,e_b)\eta^{ab}
\\
&= 
\sum_{i,j,a,b}
\Omega_{0,4}(v,e_a,e_b,e_i)
\Omega_{g-1,1}(e_j)\eta^{ab}\eta^{ij}
\\
&=
\sum_{i,j,a,b}
\eta(ve_ae_b,e_i)
\Omega_{g-1,1}(e_j)\eta^{ab}\eta^{ij}
\\
&=
\sum_{i,j}
\eta(v\mathbf{e},e_i)
\Omega_{g-1,1}(e_j)\eta^{ij}
\\
&=
\Omega_{g-1,1}(v\mathbf{e})
\\
&=
\Omega_{1,1}(v\mathbf{e}^{g-1})
\\
&=
\eta(v\mathbf{e}^{g-1},\mathbf{e})
= \eta(v,\mathbf{e}^g).
\end{align*}
\end{proof}

A closed genus $g$ surface is obtained by
sewing $g$ genus $1$ pieces with one 
output boundaries to a genus $0$ surface with
$g$ input boundaries. Since the Euler element
is the output of the genus $1$ surface
with one boundary, we 
obtain the same result
$$
Z(\Sigma_g) = \Omega_{0,g}(\overset{g}{\overbrace{\mathbf{e},\dots,
\mathbf{e}}}).
$$
Finally we have the following:

\begin{thm}
The value or the 2D TQFT is given by
\be\label{TQFT gn}
\Omega_{g,n}(v_1,\dots,v_n)
=
\epsilon(v_1\cdots v_n \mathbf{e}^g).
\ee
\end{thm}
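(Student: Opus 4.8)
The plan is to reduce the general value to the already-established one-point value $\Omega_{g,1}(v)=\eta(v,\mathbf{e}^g)=\epsilon(v\mathbf{e}^g)$ from the proof of Proposition~\ref{prop:g-invariant} (the second equality being the Frobenius-symmetry identity $\eta(v,\mathbf{e}^g)=\eta(\mathbf{1},v\mathbf{e}^g)=\epsilon(v\mathbf{e}^g)$). The engine of the reduction is the claim that for $n\ge 2$ one may merge the last two inputs through the algebra multiplication:
\[
\Omega_{g,n}(v_1,\dots,v_n)=\Omega_{g,n-1}(v_1,\dots,v_{n-2},v_{n-1}v_n).
\]
Iterating this identity collapses all $n$ arguments into the single product and gives $\Omega_{g,n}(v_1,\dots,v_n)=\Omega_{g,1}(v_1\cdots v_n)$, at which point the base value finishes the proof.

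To establish the merging identity I would apply the gluing axiom CohFT~3 with $g_1=g$, $g_2=0$, $I=\{1,\dots,n-2\}$, and $J=\{n-1,n\}$. Since everything lives in $H^0(\Mbar_{g,n},K)=K$, the pullback $gl_2^*$ acts trivially, so CohFT~3 becomes the purely algebraic statement $\Omega_{g,n}(v_1,\dots,v_n)=\sum_{a,b}\Omega_{g,n-1}(v_1,\dots,v_{n-2},e_a)\,\Omega_{0,3}(v_{n-1},v_n,e_b)\,\eta^{ab}$. Using the definition \eqref{TQFT 03}, the genus-zero three-point factor is $\Omega_{0,3}(v_{n-1},v_n,e_b)=\epsilon(v_{n-1}v_n e_b)=\eta(v_{n-1}v_n,e_b)$. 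Substituting this and invoking the completeness relation \eqref{complete set} in the form $w=\sum_{a,b}\eta(w,e_b)\eta^{ab}e_a$ with $w=v_{n-1}v_n$, together with multilinearity of $\Omega_{g,n-1}$ in its last slot, collapses the double sum to exactly $\Omega_{g,n-1}(v_1,\dots,v_{n-2},v_{n-1}v_n)$.

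With the reduction in hand, the conclusion is immediate: iterating down to $\Omega_{g,1}(v_1\cdots v_n)$ and applying $\Omega_{g,1}(w)=\epsilon(w\mathbf{e}^g)$ yields $\Omega_{g,n}(v_1,\dots,v_n)=\epsilon(v_1\cdots v_n\mathbf{e}^g)$, which is \eqref{TQFT gn}; the case $g=0$ is consistent since $\mathbf{e}^0=\mathbf{1}$ recovers \eqref{TQFT 0n}, and for $n=2$ one instead invokes the formal value $\Omega_{0,2}=\eta$ from \eqref{CohFT02}. I do not anticipate a genuine obstacle: the theorem is essentially bookkeeping once the CohFT machinery is available, and the only points demanding care are that the extended quantities $\Omega_{0,3}$ and $\Omega_{0,2}$ must be used through their \emph{defining} equations rather than through actual moduli spaces, and that $gl_2^*$ is trivial on degree-zero cohomology. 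As a conceptual cross-check I would also note the geometric argument hinted at in the text: building $\Sigma_{g,n}$ by sewing $g$ genus-one one-boundary caps, each outputting the Euler element $\mathbf{e}$ by \eqref{11=Euler}, onto $\Sigma_{0,n+g}$, and then applying \eqref{TQFT 0n} to obtain $\Omega_{0,n+g}(v_1,\dots,v_n,\mathbf{e},\dots,\mathbf{e})=\epsilon(v_1\cdots v_n\mathbf{e}^g)$ directly.
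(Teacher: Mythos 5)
Your proof is correct, but it takes a route dual to the paper's. The paper reduces the \emph{genus} first: it iterates the handle-absorption identity $\Omega_{g,n}(v_1,\dots,v_n)=\Omega_{g-1,n}(v_1\mathbf{e},v_2,\dots,v_n)$ from the proof of Proposition~\ref{prop:g-invariant} to reach $\Omega_{1,n}(v_1\mathbf{e}^{g-1},v_2,\dots,v_n)$, applies CohFT~2 (the $gl_1$ gluing) once more to land in $\Omega_{0,n+2}$, and finishes with the genus-zero formula \eqref{TQFT 0n} together with $\mathbf{e}=\sum_{a,b}\eta^{ab}e_ae_b$. You instead reduce the \emph{number of marked points} first: your merging identity $\Omega_{g,n}(v_1,\dots,v_n)=\Omega_{g,n-1}(v_1,\dots,v_{n-2},v_{n-1}v_n)$, obtained from CohFT~3 with a genus-zero three-point cap, \eqref{TQFT 03}, and the completeness relation \eqref{complete set}, collapses everything to $\Omega_{g,1}(v_1\cdots v_n)$, after which the one-point value $\Omega_{g,1}(w)=\eta(w,\mathbf{e}^g)=\epsilon(w\mathbf{e}^g)$ from Proposition~\ref{prop:g-invariant} concludes. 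Both arguments rest on the same previously established facts, and your bookkeeping at the unstable boundary is sound: you correctly use $\Omega_{0,3}$ and $\Omega_{0,2}$ only through their defining equations (for $g=0$ the iteration must stop at three points, but that case is already \eqref{TQFT 0n}, as you note, and \eqref{CohFT02} covers $\Omega_{0,2}=\eta$), and the triviality of $gl_2^*$ on $H^0(\Mbar_{g,n},K)=K$ is exactly the convention the paper adopts. What your route buys is that it isolates the multiplicativity-in-two-slots identity as the single engine of the proof --- precisely the algebraic content that reappears later as ECA~1 --- whereas the paper's version is a terser continuation of the inductive chain of Proposition~\ref{prop:g-invariant}; your closing geometric cross-check via sewing $g$ one-holed tori is likewise the observation the paper records just before the theorem as $Z(\Sigma_g)=\Omega_{0,g}(\mathbf{e},\dots,\mathbf{e})$.
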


\begin{proof}
The argument is the same as the proof
of Proposition~\ref{prop:g-invariant}:
\begin{align*}
\Omega_{g,n}(v_1,\dots,v_n)
&=
\Omega_{1,n}(v_1\mathbf{e}^{g-1},v_2,\dots,
v_n)
\\
&=
\sum_{a,b}
\Omega_{0,n+2}(v_1\mathbf{e}^{g-1},v_2,\dots,
v_n, e_a,e_b)\eta^{ab}
\\
&=
\epsilon(v_1\cdots v_n \mathbf{e}^g).
\end{align*}
\end{proof}

\begin{ex} Let $G$ be a finite group.
The center of the complex group algebra
$Z\bC[G]$  is a semi-simple 
Frobenius algebra over $\bC$. 
For every conjugacy class $c$ of $G$, 
the sum of group elements in $c$, 
$$
v(C) := \sum_{u\in C} u \in \bC[G],
$$
is central and defines an element of $Z\bC[G]$.
Although we do not discuss it any further
here, the corresponding TQFT is equivalent to 
 counting problems of 
 character varieties of the fundamental group 
of $n$-punctured topological surface of genus $g$
into $G$. 
\end{ex}

\section{The edge-contraction axioms}
\label{sect:ECA}

In this section we give a   formulation of 
 2D TQFTs
based on the edge-contraction operations on
cell graphs and a new set of 
axioms. The main theorem of this section,
Theorem~\ref{thm:independence}, motivates
our construction of the category of cell graphs and 
 the Frobenius ECO functor in 
 Section~\ref{sect:category}.

\begin{Def}[Cell graphs]
A connected \textbf{cell graph} 
of topological type $(g,n)$ is the
$1$-skeleton
(the union of $0$-cells and
$1$-cells) of a cell-decomposition of a
connected
compact oriented topological 
surface of genus $g$ with 
$n$ labeled $0$-cells. We call a $0$-cell a 
\emph{vertex}, a $1$-cell an \emph{edge}, 
and a $2$-cell a \emph{face}, of a cell graph.
\end{Def}

\begin{rem}
The \emph{dual} of a cell graph is
usually referred to as
a \emph{ribbon graph}, or a
\emph{dessin d'enfant} of
Grothendieck.  A ribbon graph
is a graph
with cyclic order assigned to incident half-edges
at each vertex. Such assignments induce a
cyclic order of  half-edges at each vertex of
the dual graph. Thus
 a cell graph itself
is a ribbon graph.
We note that  vertices
of a cell graph are labeled, which corresponds to 
the usual face labeling of a ribbon graph.
\end{rem}

\begin{rem}We identify two cell graphs if there is a 
homeomorphism of the surfaces that brings 
one cell-decomposition to the other, 
keeping the labeling of $0$-cells. The only 
possible automorphisms of a cell graph
come from cyclic rotations of half-edges
at each vertex. 
\end{rem}

We denote by $\Gam_{g,n}$ the set of 
connected cell graphs of type $(g,n)$ with labeled
vertices.

\begin{Def}[Edge-contraction axioms]
\label{def:ECA}
The \textbf{edge-contraction axioms} are the 
following set of rules for the assignment
\be
\label{Omega}
\Omega:\Gam_{g,n}\lrar (A^*)^{\tensor n}
\ee
of 
a multilinear map
$$
\Omega(\gam):A^{\tensor n}\lrar K
$$
to each cell graph $\gam\in \Gam_{g,n}$.
We consider $\Omega(\gam)$ an $n$-variable function
$\Omega(\gam)(v_1,\dots,v_n)$, where we assign
$v_i\in A$ to the $i$-th vertex of $\gam$.
\begin{itemize}
\item \textbf{ECA 0}: For the simplest cell graph $\gam_0=\bullet \in \Gam_{0,1}$
that consists of only one vertex without any edges,
we define
\be\label{ECA0}
\Omega(\bullet)(v) = \epsilon(v), \qquad v\in A.
\ee

\item \textbf{ECA 1}: Suppose there is an edge $E$
connecting the $i$-th vertex and the $j$-th
vertex for $i<j$ in $\gam\in \Gam_{g,n}$.
Let $\gam'\in \Gam_{g,n-1}$ denote the cell graph
obtained by contracting $E$. Then
\be\label{ECA1}
\Omega(\gam)(v_1,\dots,v_n) = 
\Omega(\gam')(v_1,\dots,v_{i-1},
v_iv_j,v_{i+1},\dots, \widehat{v_j},\dots,v_n),
\ee
where $\widehat{v_j}$ means we omit the 
$j$-th variable $v_j$ at the $j$-th vertex,
which no longer exists in $\gam'$.

\begin{figure}[htb]
\includegraphics[height=0.8in]{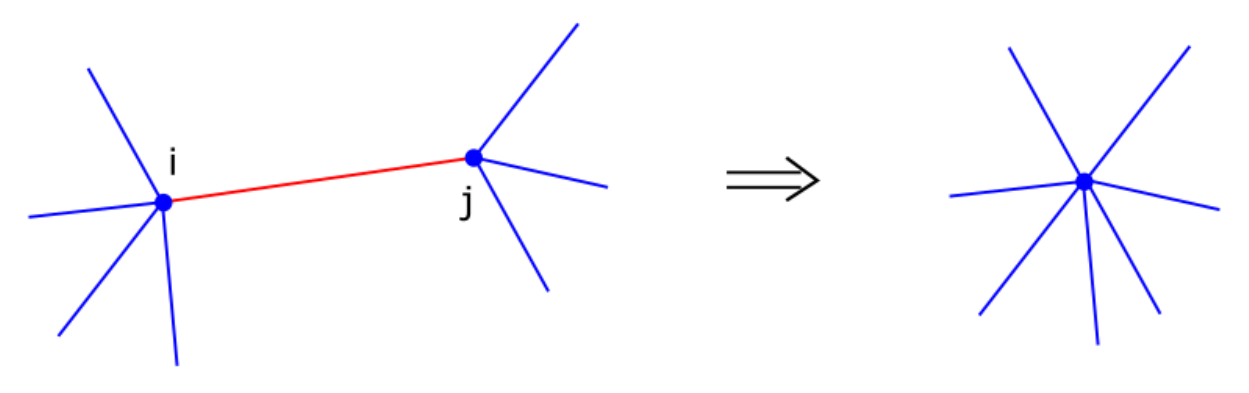}
\caption{The edge-contraction operation that
shrinks a straight edge connecting Vertex $i$
and Vertex $j$.
}
\label{fig:ECA1}
\end{figure}

\item \textbf{ECA 2}: Suppose there is a loop
$L$ in $\gam\in\Gam_{g,n}$ at the $i$-th vertex.
Let $\gam'$ denote the possibly 
disconnected graph obtained by 
contracting $L$ and separating the vertex
to two distinct vertices labeled by $i$ and $i'$.
For the purpose of labeling all vertices, 
we assign an ordering $i-1<i<i'<i+1$.

\begin{figure}[htb]
\includegraphics[height=0.8in]{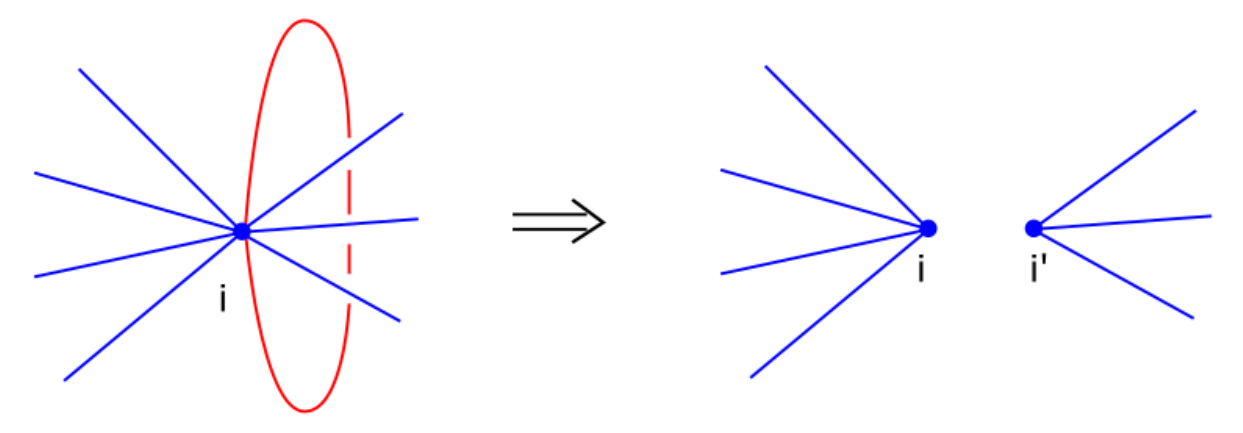}
\caption{The edge-contraction operation that
shrinks a loop attached Vertex $i$.
}
\label{fig:ECA2}
\end{figure}

If $\gam'$ is connected, then it is in $\Gam_{g-1,n+1}$.
We  call  $L$  a \textit{loop of a handle}.
We then impose
\be\label{ECA2-1}
\Omega(\gam)(v_1,\dots,v_n) = 
\Omega(\gam')(v_1,
\dots,v_{i-1},\delta(v_i),v_{i+1},\dots,v_n),
\ee
where the outcome of the comultiplication $\delta(v_i)$
is placed in the $i$-th and $i'$-th slots.

If $\gam'$ is disconnected, then write
$\gam'=(\gam_1,\gam_2)\in \Gam_{g_1,|I|+1}
\times \Gam_{g_2,|J|+1}$, where 
\be\label{disconnected}
\begin{cases}
g=g_1+g_2\\
I\sqcup J = \{1,\dots,\widehat{i},\dots,n\}
\end{cases}.
\ee
In this case  $L$ is a \textit{separating loop}.
Here, vertices labeled by $I$ belong to the connected 
component of genus 
$g_1$, and those labeled by $J$ on the other 
component. Let $(I_-,i,I_+)$ (reps. $(J_-,i,J_+)$) be reordering of $I\sqcup \{i\}$ (resp. $J\sqcup \{i\}$)
in the increasing order. 
We impose
\be\label{ECA2-2}
\Omega(\gam)(v_1,\dots,v_n) = \sum_{a,b,k,\ell}
\eta(v_i,e_ke_\ell)\eta^{ka}\eta^{\ell b}
\Omega(\gam_1)(v_{I_-},e_a,v_{I_+})
\Omega(\gam_2)(v_{J_-},e_b,v_{J_+}),
\ee
which is  similar to \eqref{ECA2-1}, 
just the comultiplication $\delta(v_i)$ is
written in terms of the basis. Here,
cocommutativity of $A$ is assumed in this formula.
\end{itemize}

\end{Def}

\begin{rem}
We do not assume the permutation symmetry 
of $\Omega(\gam)(v_1,\dots,v_n)$. The cumbersome 
notation of the axioms is due to keeping track of
the ordering of indices. 
\end{rem}

\begin{rem}
Let us define $m(\gam)=2g-2+n$
for $\gam\in \Gamma_{g,n}$. The
edge-contraction operations are reduction of 
$m(\gam)$ exactly by $1$. 
Indeed, for ECA 1, we have 
$$
m(\gam') = 2g -2
+(n-1) = m(\gam)-1.
$$
ECA 2 applied to a loop of a handle produces
$$
m(\gam') = 2(g-1)-2+(n+1) = m(\gam)-1.
$$ 
For a separating loop, we have
$$
\begin{matrix}
&2g_1-2+|I|+1
\\
{+)}&{2g_2-2+|J|+1}
\\
&\overline{2g_1+2g_2-4+|I|+|J|+2}
&=\; \;2g-2+n-1.
\end{matrix}
$$
\end{rem}

This reduction  is used in the
proof of the following theorem.

\begin{thm}[Graph independence]
\label{thm:independence}
As the consequence of the edge-contraction axioms,
every connected cell graph $\gam\in \Gam_{g,n}$
gives rise to the same map
\be\label{independence}
\Omega(\gam): A^{\tensor n}\owns 
v_1\tensor \cdots \tensor v_n \longmapsto
\epsilon(v_1\cdots v_n \mathbf{e}^g)\in  K,
\ee
where $\mathbf{e}$ is the Euler element 
of \eqref{Euler}.
In particular, $\Omega(\gam)(v_1,\dots,v_n)$ is 
symmetric with respect to permutations of indices.
\end{thm}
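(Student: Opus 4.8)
The plan is to prove the explicit formula \eqref{independence} by strong induction on the number of edges of $\gam$, and to observe that graph-independence then follows \emph{automatically}: since the claimed value $\epsilon(v_1\cdots v_n\mathbf{e}^g)$ depends only on $(g,n)$ and, by commutativity of $A$, only on the unordered product $v_1\cdots v_n$, it suffices to check that \emph{some} edge-contraction reduces $\Omega(\gam)$ to this value. One never has to compare two different contraction sequences directly; consistency is a byproduct of the induction. The base case is the edgeless graph $\bullet\in\Gam_{0,1}$, the only connected cell graph with no edges, for which \textbf{ECA 0} gives $\Omega(\bullet)(v)=\epsilon(v)=\epsilon(v\,\mathbf{e}^0)$, matching \eqref{independence}.

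For the inductive step, let $\gam$ have at least one edge and pick any edge $E$; each edge-contraction axiom strictly lowers the total edge count, so the inductive hypothesis applies to the contracted graph(s). If $E$ joins two distinct vertices, \textbf{ECA 1} rewrites $\Omega(\gam)$ as $\Omega(\gam')$ with $\gam'\in\Gam_{g,n-1}$ and the pair $v_i,v_j$ replaced by the single entry $v_iv_j$. The total product and the genus are unchanged, so the inductive hypothesis immediately yields $\epsilon(v_1\cdots v_n\,\mathbf{e}^g)$.

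The two loop cases of \textbf{ECA 2} carry the real content. For a loop of a handle, $\gam'\in\Gam_{g-1,n+1}$ and $v_i$ is replaced by $\delta(v_i)$ occupying two slots; applying the inductive hypothesis and then fusing those two slots under the multiplication produces the factor $m\circ\delta(v_i)$. The key computational lemma is the identity $m\circ\delta(v)=v\,\mathbf{e}$, which follows from the basis expression \eqref{Euler basis} of the Euler element together with \eqref{complete set} and the Frobenius property; it converts $\mathbf{e}^{g-1}$ into $\mathbf{e}^{g}$ and gives exactly $\epsilon(v_1\cdots v_n\,\mathbf{e}^g)$. For a separating loop, \textbf{ECA 2} writes $\Omega(\gam)$ as a double sum over $\Omega(\gam_1)$ and $\Omega(\gam_2)$ with $g=g_1+g_2$ and $\{i\}\sqcup I\sqcup J=\{1,\dots,n\}$; feeding in the inductive hypothesis turns each factor into $\epsilon(v_I e_a\mathbf{e}^{g_1})=\eta(P,e_a)$ and $\epsilon(v_J e_b\mathbf{e}^{g_2})=\eta(Q,e_b)$, where $P=v_I\mathbf{e}^{g_1}$ and $Q=v_J\mathbf{e}^{g_2}$ are the products over $I$ and $J$. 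Contracting the indices $a,b$ against $\eta^{ka},\eta^{\ell b}$ by \eqref{complete set} collapses the sum to $\eta\bigl(v_i,PQ\bigr)=\epsilon\bigl(v_i v_I v_J\,\mathbf{e}^{g_1+g_2}\bigr)=\epsilon(v_1\cdots v_n\,\mathbf{e}^g)$.

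I expect the main obstacle to be the choice of induction parameter rather than any deep computation. The tempting candidate $m(\gam)=2g-2+n$ fails in the separating case: a loop bounding an empty disc (as in Figure~\ref{fig:01}) can split off a $(0,1)$-component with $m=-1$, leaving the complementary component with the \emph{same} value of $m$ as $\gam$, so the induction would not descend. Inducting on the number of edges repairs this, since every contraction strictly decreases the total edge count. Beyond that, the only care needed is in tracking the index reorderings $(I_-,i,I_+)$ and $(J_-,i,J_+)$, which are harmless because $A$ is commutative, and in verifying the single algebraic identity $m\circ\delta(v)=v\,\mathbf{e}$ that powers the handle case.
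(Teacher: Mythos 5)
Your proof is correct, and its three case computations coincide with the paper's: the ECA 1 case is absorbed by commutativity of $A$, the handle case reduces to the identity $m\circ\delta(v)=v\mathbf{e}$ (the paper performs the same manipulation inline using \eqref{complete set} and \eqref{Euler basis}), and the separating case collapses by contracting the indices $a,b$ against $\eta^{ka},\eta^{\ell b}$. The genuine difference is the induction parameter, and your choice is the stronger one: it repairs a real defect that your proposal correctly anticipated. The paper inducts on $m=2g-2+n$, taking as base case the value \eqref{01} for all of $\Gam_{0,1}$ (established beforehand via the edge-removal Lemma~\ref{lem:reduced}), and in the separating-loop case \eqref{ECA2-2} it applies the inductive hypothesis to both components $\gam_1\in\Gam_{g_1,|I|+1}$ and $\gam_2\in\Gam_{g_2,|J|+1}$. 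Exactly as you predicted, the descent fails there when one component, say $\gam_1$, has type $(0,1)$: in that case $m(\gam_2)=2g-2+(n-1)+1=m$, so the inductive hypothesis does not cover $\gam_2$, and the paper's proof does not treat this configuration separately. The gap is repairable --- for instance by a lexicographic induction on the pair consisting of $m$ and the number of edges, with type-$(0,1)$ factors handled by \eqref{01} --- but as written the paper's induction does not descend in that case. Your induction on the total number of edges avoids the issue outright, since every contraction strictly decreases the edge count of each resulting component, and it also shrinks the base case to the single edgeless graph $\bullet$, requiring only ECA 0 rather than Lemma~\ref{lem:reduced}. Finally, your remark that graph independence is automatic once the closed formula is verified along a single contraction per graph is the same logic the paper uses implicitly; you have merely made it explicit.
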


\begin{cor}[ECA implies TQFT]
\label{cor:ECA=TQFT}
Define $\Omega_{g,n}(v_1,\dots,v_n) = 
\Omega(\gam)(v_1,\dots,v_n)$ for any
$\gam\in \Gam_{g,n}$. Then 
$\{\Omega_{g,n}\}$ is the 2D TQFT
associated with the Frobenius algebra $A$. 
Every 2D TQFT is obtained in this way,  hence 
 the two descriptions of 2D TQFT
are equivalent.
\end{cor}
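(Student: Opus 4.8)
The plan is to derive the corollary directly from Theorem~\ref{thm:independence} and the closed formula \eqref{TQFT gn} already obtained for the $2$D TQFT attached to $A$, reducing everything to an identification of two families of multilinear maps rather than a re-verification of axioms. First I would record well-definedness: Theorem~\ref{thm:independence} asserts that $\Omega(\gam)$ depends only on the type $(g,n)$, so $\Omega_{g,n}(v_1,\dots,v_n):=\Omega(\gam)(v_1,\dots,v_n)$ is unambiguous and equals $\epsilon(v_1\cdots v_n\mathbf{e}^g)$, with $\mathbf{e}$ the Euler element \eqref{Euler}. Comparing with \eqref{TQFT gn}, the family $\{\Omega_{g,n}\}$ agrees with the $2$D TQFT of $A$ as a multilinear map $A^{\tensor n}\lrar K$ for every $(g,n)$ and on every argument $v_1\tensor\cdots\tensor v_n$; since a multilinear map is determined by its values, the two families are \emph{equal}. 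In particular $\{\Omega_{g,n}\}$ satisfies the Atiyah--Segal sewing axioms, which settles the first assertion.

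For the converse, ``every $2$D TQFT is obtained in this way,'' I would invoke the classification of Abrams \cite{Abrams} together with Atiyah--Segal \cite{Atiyah,Segal}: an arbitrary $2$D TQFT $Z$ determines the commutative Frobenius algebra $A=Z(S^1)$, and its values are forced by \eqref{TQFT gn} to be $\epsilon(v_1\cdots v_n\mathbf{e}^g)$. Feeding this same $A$ into the edge-contraction construction reproduces precisely these values by the first paragraph, so $Z$ coincides with the ECA family built from its own Frobenius algebra. Assembling the two directions yields the claimed equivalence: the cell-graph assignment $A\mapsto\{\Omega(\gam)\}_{\gam}$ and the Atiyah--Segal assignment $A\mapsto Z$ carry the same Frobenius data to the same TQFT, and Theorem~\ref{thm:independence} is exactly the graph-independence that lets the cell-graph data descend to an honest TQFT.

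The only point needing genuine care is the logical status of \emph{existence}: Theorem~\ref{thm:independence} derives the formula \emph{under the hypothesis} that some assignment obeying ECA~0--2 exists, so to close the argument I would check directly that $\gam\mapsto\epsilon(v_1\cdots v_n\mathbf{e}^{g})$ does satisfy the axioms and is therefore the (unique) solution. ECA~0 is the definition $\epsilon(v)$; ECA~1 is immediate, since contracting a straight edge only fuses $v_iv_j$ and leaves $g$ fixed; the handle case of ECA~2 uses the identity $m\circ\delta(v)=v\mathbf{e}$ derived from Lemma~\ref{lem:delta m} to absorb the inserted $\delta(v_i)$, against the drop $g\mapsto g-1$, back into a factor $v_i\mathbf{e}$; and the separating case of ECA~2 follows from the completeness relation \eqref{complete set}, the symmetry of $\eta$, and $g_1+g_2=g$. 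I expect this consistency check, rather than the identification itself, to be the main (if routine) obstacle, since it is the only place where the Frobenius identities actually do the work.
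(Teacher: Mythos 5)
Your proposal is correct and takes essentially the same route as the paper's own proof: both reduce the corollary to the identification of $\Omega(\gam)(v_1,\dots,v_n)$ with the closed TQFT formula $\epsilon(v_1\cdots v_n\,\mathbf{e}^g)$ of \eqref{TQFT gn} via Theorem~\ref{thm:independence}, with the converse direction supplied by the Frobenius-algebra classification of 2D TQFTs. The consistency check you rightly flag in your last paragraph (that $\gam\mapsto\epsilon(v_1\cdots v_n\mathbf{e}^g)$ actually satisfies ECA~0--2, so that a solution exists) is precisely the computation already carried out in the paper's inductive proof of Theorem~\ref{thm:independence} --- including your identity $m\circ\delta(v)=v\mathbf{e}$ --- so it sharpens the logical framing but introduces no new argument.
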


\begin{proof}[Proof of Corollary~\ref{cor:ECA=TQFT}
assuming Theorem~\ref{thm:independence}]
Since both ECAs and 2D TQFT give the 
unique value
$$
\Omega(\gam)(v_1,\dots,v_n)
=\epsilon(v_1\cdots v_n\mathbf{e}^g)
=\Omega_{g,n}(v_1,\dots,v_n)
$$
for all $(g,n)$ from \eqref{TQFT gn}, we 
see that the two sets of 
axioms are equivalent, and also
that the edge-contraction axioms 
produce evert 2D TQFT. 
\end{proof}

To illustrate the graph independence, 
let us first examine three simple cases.

\begin{lem} [Edge-removal lemma]
\label{lem:reduced}
Let $\gam\in \Gam_{g,n}$.
\begin{enumerate}
\item Suppose there is a disc-bounding loop $L$ in 
$\gam$ (the  graph on the left
of Figure~\ref{fig:eliminate}). 
Let $\gam'\in \Gam_{g,n}$ be the graph
obtained by removing $L$ from $\gam$. 

\item Suppose there are two edges $E_1$ and
$E_2$ between 
two distinct vertices Vertex $i$ and Vertex $j$,
$i < j$, that bound a disc
(the middle graph 
of Figure~\ref{fig:eliminate}). Let $\gam'\in \Gam_{g,n}$ 
be the graph obtained by 
removing $E_2$.

\item Suppose two loops, $L_1$ and $L_2$, are
attached to the $i$-th vertex
(the  graph on the right
of Figure~\ref{fig:eliminate}). If they are 
homotopic, then let $\gam'\in \Gam_{g,n}$
be the graph obtained by removing $L_2$ from 
$\gam$. 
\end{enumerate}
In each of the above cases, we  have
\be\label{gam=gam'}
\Omega(\gam)(v_1,\dots,v_n) = 
\Omega(\gam')(v_1,\dots,v_n).
\ee
\end{lem}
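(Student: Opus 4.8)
The plan is to prove all three cases by reducing them to a single algebraic computation governed by the edge-contraction axioms, treating case (1) as the base and deriving cases (2) and (3) from it after performing one contraction.

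For case (1), I would contract the disc-bounding loop $L$ at the $i$-th vertex using ECA~2. Since $L$ bounds a disc, cutting along it peels off a monogon whose only incidence is the newly created vertex $i'$; hence the contraction is separating and produces $(\gam_1,\bullet)$ with $\gam_1\in\Gam_{g,n}$ the graph obtained by deleting $L$ and the trivial factor $\bullet\in\Gam_{0,1}$ carrying genus $g_2=0$. Applying \eqref{ECA2-2} and then \eqref{ECA0} to evaluate the trivial factor as $\Omega(\bullet)(e_b)=\epsilon(e_b)=\eta(\mathbf{1},e_b)$, the sum collapses: summing $\sum_b\eta^{\ell b}\eta(\mathbf{1},e_b)$ recovers the coefficients of $\mathbf{1}$, so pairing $\sum_\ell\eta(v_i,e_ke_\ell)$ against those coefficients gives $\eta(v_i,e_k\mathbf{1})=\eta(v_i,e_k)$, and a final use of the complete-set formula \eqref{complete set}, in the form $\sum_{a,k}\eta(v_i,e_k)\eta^{ka}e_a=v_i$, yields $\Omega(\gam)(v_1,\dots,v_n)=\Omega(\gam_1)(\dots,v_i,\dots)=\Omega(\gam')(v_1,\dots,v_n)$. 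This is the entire content of case (1).

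For case (2), I would contract the edge $E_1$ between the vertices $i<j$ by ECA~1, which fuses the two vertices and replaces their arguments by the product $v_iv_j$. Because $E_1$ and $E_2$ bounded a bigon, after fusing $i$ and $j$ the remaining edge $E_2$ becomes a loop at the merged vertex bounding a now-monogonal disc; case~(1) then removes $E_2$ without altering any argument, and the resulting graph is exactly the one obtained by contracting $E_1$ in $\gam'$, so $\Omega(\gam)=\Omega(\gam')$. Case~(3) follows the same template: contracting $L_1$ by ECA~2 collapses one side of the bigon $D$ bounded by $L_1$ and $L_2$, so $D$ becomes a monogon and $L_2$ becomes a disc-bounding loop in the contracted graph, while the comultiplication output produced by \eqref{ECA2-1} is deposited in the slots for $i$ and $i'$ and is left untouched by the subsequent removal of $L_2$ via case~(1). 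Since contracting $L_1$ in $\gam'$ produces the same graph with the same slot data, the two maps again agree.

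I expect the main obstacle to be \emph{topological} rather than algebraic: one must verify carefully, using the ribbon (cyclic-order) structure at each vertex, that the configurations behave as claimed under contraction — that a disc-bounding loop contracts to a separating pair with a trivial $\bullet$-component (case 1), that contracting one of two bigon-bounding edges turns the other into a disc-bounding loop (case 2), and that contracting $L_1$ both lands the two half-edges of $L_2$ on a single resulting vertex and converts the bigon into a monogon (case 3). The genus and marked-point bookkeeping must also be checked so that each intermediate graph lies in the advertised $\Gam_{g',n'}$, and in case (3) one must confirm that the splitting prescribed by \eqref{ECA2-1} commutes with the application of case (1). Once these combinatorial identifications are in place, the algebra is immediate from \eqref{complete set}, \eqref{ECA0}, \eqref{ECA1}, and the unit property of $\mathbf{1}$.
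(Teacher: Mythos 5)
Your proposal is correct and follows essentially the same route as the paper: case (1) is proved by contracting the disc-bounding loop via ECA~2 into the trivial one-vertex factor $\bullet$ and collapsing the sums with $\epsilon(e_b)=\eta(\mathbf{1},e_b)$, the unit property, and \eqref{complete set}; cases (2) and (3) are then reduced to case (1) by contracting $E_1$ (resp.\ $L_1$), removing the resulting disc-bounding loop, and ``restoring'' the contracted edge by applying the same contraction to $\gam'$. The only cosmetic differences are an index relabeling in the case-(1) computation and your explicit remark about the topological verifications, which the paper also leaves implicit.
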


\begin{figure}[htb]
\includegraphics[height=1in]{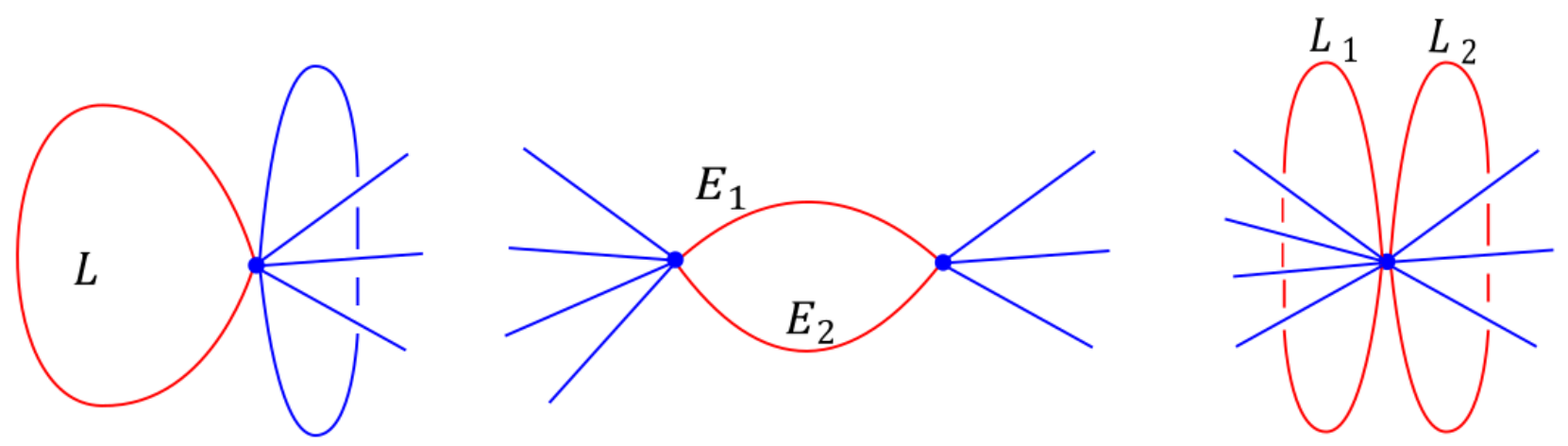}
\caption{
}
\label{fig:eliminate}
\end{figure}

\begin{proof}
(1)  Contracting a disc-bounding loop attached to 
the $i$-th vertex creates 
$(\gam_0,\gam')\in \Gam_{0,1}\times \Gam_{g,n}$,
where $\gam_0$ consists of only one vertex and
no edges. Then ECA 2 reads
\begin{align*}
\Omega(\gam)(v_1,\dots,v_n) &= 
\sum_{a,b,k,\ell}\eta(v_i,e_ke_\ell)
\eta^{ka}\eta^{\ell b}\gam_0(e_a)
\Omega(\gam')(v_1,\dots,v_{i-1},e_b,v_{i+1}
\dots,v_n)
\\
&=
\sum_{a,b,k,\ell}\eta(v_i,e_ke_\ell)
\eta^{ka}\eta^{\ell b}\eta(1,e_a)
\Omega(\gam')(v_1,\dots,v_{i-1},e_b,v_{i+1}
\dots,v_n)
\\
&=
\sum_{b,k,\ell}\eta(v_i,e_ke_\ell)
\delta^k _1\eta^{\ell b}
\Omega(\gam')(v_1,\dots,v_{i-1},e_b,v_{i+1}
\dots,v_n)
\\
&=
\sum_{b,\ell}\eta(v_i,e_\ell)
\eta^{\ell b}
\Omega(\gam')(v_1,\dots,v_{i-1},e_b,v_{i+1}
\dots,v_n)
\\
&=
\Omega(\gam')(v_1,\dots,v_{i-1},v_i,v_{i+1}
\dots,v_n).
\end{align*}

(2) Contracting Edge $E_1$ makes $E_2$
 a disc-bounding 
loop at Vertex $i$. We can remove it by
(1). Note that the new Vertex $i$ is assigned
with $v_iv_j$. Restoring $E_1$  makes
the graph exactly the one obtained by removing
 $E_2$ from $\gam$. Thus \eqref{gam=gam'}
holds. 

\smallskip

(3) Contracting Loop $L_1$ makes $L_2$
a disc-bounding loop. Hence we can remove it
by (1). Then restoring $L_1$ creates a graph
obtained from $\gam$ by removing $L_2$.
Thus \eqref{gam=gam'}
holds. 
\end{proof}

\begin{rem}
The three cases
treated above correspond to eliminating
degree $1$ and $2$ vertices from the  ribbon
graph dual to the cell graph. In combinatorial 
moduli theory, we normally consider 
ribbon graphs that have no vertices 
of degree less than 3 \cite{MP1998}. 
\end{rem}

\begin{Def}[Reduced graph]
We call a cell graph \textbf{reduced} if it
 does not have any disc-bounding
loops or disc-bounding bigons. In other words,
the dual ribbon graph of a reduced cell graph 
has no vertices of degree $1$ or $2$.
\end{Def}

We can see from Lemma~\ref{lem:reduced} (1)
that
every $\gam_{0,1}\in\Gam_{0,1}$ gives rise to the 
same map 
\be\label{01}
\Omega(\gam_{0,1})(v) = \epsilon(v). 
\ee
Likewise, 
Lemma~\ref{lem:reduced} (1) and (2) show 
that every $\gam_{0,2}\in \Gam_{0,2}$ gives the
same map
$$
\Omega(\gam_{0,2})(v_1,v_2) = \eta(v_1,v_2).
$$
This is because we can remove all edges and loops but
one that connects the two vertices, and from ECA 1,
the value of the assignment is $\epsilon(v_1v_2)$.

\begin{proof}[Proof of Theorem~\ref{thm:independence}]
We use the induction on $m=2g-2+n$. The base
case is $m=-1$, or $(g,n) = (0,1)$, for which
the theorem holds by \eqref{01}.
Assume that \eqref{independence}
holds for all $(g,n)$ with
$2g-2+n<m$. Now let $\gam
\in \Gam_{g,n}$ be a cell graph 
of type $(g,n)$  such that 
$2n-2+n=m$. 

Choose an
arbitrary straight edge of $\gam$ that connects
two distinct vertices, say Vertex $i$ and Vertex $j$,
$i<j$. By contracting this edge, we obtain by ECA 1,
$$
\Omega(\gam)(v_1,\dots,v_n)  = 
\Omega(\gam_{g,n-1})(v_1,\dots,v_{i-1},v_iv_j,
v_{i+1} \dots, \widehat{v_j},\dots,v_n)
=\epsilon(v_1\dots v_n \mathbf{e}^g).
$$
If we have chosen an arbitrary
loop attached to Vertex $i$, then its contraction by
ECA 2 gives two cases, depending  on whether
the loop is a loop of a handle, or a separating 
loop. For the first case, by appealing to
\eqref{complete set} and
\eqref{Euler basis}, we obtain
\begin{align*}
\Omega(\gam)(v_1,\dots,v_n)  
&= 
\sum_{a,b,k,\ell}\eta(v_i,e_ke_\ell)\eta^{ka}
\eta^{\ell b}
\Omega(\gam_{g-1,n+1})(v_1,\dots,v_{i-1},
e_a,e_b,v_{i+1},\dots,v_n)
\\
&=
\sum_{a,b,k,\ell}\eta(v_ie_k,e_\ell)\eta^{ka}
\eta^{\ell b}
\Omega(\gam_{g-1,n+1})(v_1,\dots,v_{i-1},
e_a,e_b,v_{i+1},\dots,v_n)
\\
&=
\sum_{a,k}\eta^{ka}
\Omega(\gam_{g-1,n+1})(v_1,\dots,v_{i-1},
e_a,v_ie_k,v_{i+1},\dots,v_n)
\\
&=
\sum_{a,k}\eta^{ka}
\epsilon(v_1\cdots v_n \mathbf{e}^{g-1}e_ae_b )
\\
&=
\epsilon(v_1\cdots v_n \mathbf{e}^g).
\end{align*}
For the case of a separating loop, 
again by appealing to \eqref{complete set}, 
we have
\begin{align*}
\Omega(\gam)(v_1,\dots,v_n)  
&= 
\sum_{a,b,k,\ell}\eta(v_i,e_ke_\ell)\eta^{ka}
\eta^{\ell b}
\Omega(\gam_{g_1,|I|+1})
\big(v_{I_-},e_a,v_{I_+}\big)
\Omega(\gam_{g_2,|J|+1})
\big(v_{J_-},e_b,v_{J_+}\big)
\\
&=
\sum_{a,b,k,\ell}\eta(v_i,e_ke_\ell)\eta^{ka}
\eta^{\ell b}
\epsilon\left(e_a \prod_{c\in I}v_c
\mathbf{e}^{g_1}\right)
\epsilon\left(e_b \prod_{d\in J}v_d
\mathbf{e}^{g_2}\right)
\\
&=
\sum_{a,b,k,\ell}\eta(v_ie_k,e_\ell)\eta^{ka}
\eta^{\ell b}
\eta\left(\prod_{c\in I}v_c, e_a
\mathbf{e}^{g_1}\right)
\epsilon\left(e_b \prod_{d\in J}v_d
\mathbf{e}^{g_2}\right)
\\
&=
\sum_{a,k}\eta^{ka}
\eta\left(\prod_{c\in I}v_c\mathbf{e}^{g_1}
, e_a\right)
\epsilon\left(v_ie_k \prod_{d\in J}v_d
\mathbf{e}^{g_2}\right)
\\
&=
\epsilon\left(v_i 
\prod_{c\in I}v_c\mathbf{e}^{g_1}
\prod_{d\in J}v_d
\mathbf{e}^{g_2}\right)
\\
&=
\epsilon(v_1\cdots v_n\mathbf{e}^{g_1+g_2}).
\end{align*}
Therefore, no matter how we apply 
ECA 1 or ECA 2, we always obtain the same
result. This completes the proof.
\end{proof}

\begin{rem}
There is a different proof of the graph independence
theorem, using a topological idea of deforming
graphs
similar to the one used in \cite{MY}.
\end{rem}

As we see, the key reason for
the graph independence of 
Theorem~\ref{thm:independence}
is the 
property of the Frobenius algebra $A$
that we have, namely, commutativity, 
cocommutativity, associativity,
coassociativity, and the Frobenius relation
\eqref{Frob}. These properties are manifest in 
the following graph operations.
Although the next proposition is an 
easy consequence of Theorem~\ref{thm:independence},
we derive it directly from the 
ECAs so that we can see how the 
algebraic structure of the Frobenius algebra is 
encoded into the TQFT. 
Indeed, the graph-independence theorem
also follows from 
Proposition~\ref{prop:commutativity}.
This fact motivates us to introduce the
category of cell graphs and the 
Frobenius ECO functor in the next section.

\begin{prop} [Commutativity of Edge Contractions]
\label{prop:commutativity}
Let $\gam\in\Gam_{g,n}$.
\begin{enumerate}
\item
Suppose Vertex $i$ is connected to two distinct
vertices Vertex $j$ and Vertex $k$ by two edges, 
$E_j$ and $E_k$.
 The graph we obtain, denoted as
$\gam'\in \Gam_{g,n-2}$, by
first contracting $E_j$ and then contracting
$E_k$, is the same as contracting the edges in the
opposite order. The two different orders 
of the application of ECA 1 then gives the same
answer. For example, if $i<j<k$, then we have
\be\label{edge order}
\Omega(\gam)(v_1,\dots,v_n) = 
\Omega(\gam')(v_1,
\dots,v_{i-1},v_iv_jv_k,
v_{i+1},\dots, \widehat{v_j},\dots,
\widehat{v_k},\dots,v_n).
\ee

\item Suppose two loops $L_1$ and $L_2$
are connected to Vertex $i$. Then the contraction 
of the two loops in different orders gives the same
result.

\item Suppose  a loop $L$ and a 
straight edge $E$ are attached to Vertex $i$, 
where $E$ connects to Vertex $j$, $i\ne j$. 
Then contracting $L$ first and followed by 
contracting $E$, gives the same result as we
contract $L$ and $E$ in the other way around.
\end{enumerate}
\end{prop}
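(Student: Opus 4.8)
The plan is to treat the three parts as direct manifestations of the three compatibility properties of the Frobenius algebra $A$: part (1) of associativity and commutativity of the product $m$, part (2) of coassociativity and cocommutativity of the coproduct $\delta$, and part (3) of the product--coproduct compatibility recorded in Lemma~\ref{lem:delta m}. In each case I would apply the relevant edge-contraction axiom in the two admissible orders, observe that the underlying topological reduction produces the \emph{same} reduced cell graph, and then verify that the two algebraic outputs agree.

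For part (1), I would contract $E_j$ and $E_k$ in both orders using ECA 1 (equation~\eqref{ECA1}). Contracting two edges incident to the common Vertex $i$ merges the three vertices $i,j,k$ into a single vertex and yields the same graph $\gam'\in\Gam_{g,n-2}$ independently of the order, because the two contractions affect disjoint edges meeting only at the shared vertex. Contracting $E_j$ then $E_k$ places $(v_iv_j)v_k$ at the surviving vertex, while the opposite order places $(v_iv_k)v_j$; these coincide by associativity and commutativity of $m$, giving the common value $v_iv_jv_k$ claimed in~\eqref{edge order}.

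For part (2), the two loops $L_1,L_2$ are contracted by ECA 2. Contracting $L_1$ (by~\eqref{ECA2-1} or~\eqref{ECA2-2}) inserts $\delta(v_i)$ in the two slots created by splitting Vertex $i$; the remaining loop $L_2$ then sits at one of these slots, and contracting it applies $\delta$ a second time to that factor. The two orders produce $(\delta\tensor id)\delta(v_i)$ and $(id\tensor\delta)\delta(v_i)$ respectively, up to the placement of tensor factors in the labeled slots, and these agree by coassociativity, while cocommutativity absorbs the reordering of slots dictated by the labeling convention $i-1<i<i'<i+1$. I expect this part to demand the most care: one must track how the ribbon structure around Vertex $i$ distributes the half-edges of $L_2$ once $L_1$ is contracted, and check the statement across the distinct topological configurations (both loops of handles, one or both separating, and the nested versus unnested arrangements). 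The main obstacle is precisely this combinatorial bookkeeping --- matching the labeled slots of the twice-split vertex to the tensor factors of the iterated coproduct --- rather than any new algebraic identity.

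For part (3), I would contract $L$ by ECA 2 and $E$ by ECA 1. Contracting $E$ first merges Vertex $i$ and Vertex $j$ into a vertex carrying $v_iv_j$, and then contracting the loop $L$ produces $\delta(v_iv_j)$ in the two new slots. Contracting $L$ first instead yields $\delta(v_i)$ distributed over the split vertex, after which contracting $E$ multiplies $v_j$ into the factor of $\delta(v_i)$ attached to $E$, producing $(id\tensor m)\big(\delta(v_i),v_j\big)$. These two outputs are equal by Lemma~\ref{lem:delta m}, which is exactly the product--coproduct compatibility encoded by the Frobenius relation~\eqref{Frob}; the separating-loop case reduces to the same identity after expanding $\delta$ in the basis as in~\eqref{ECA2-2}. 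Taken together, the three parts exhibit how associativity, coassociativity, and the Frobenius relation are built directly into the edge-contraction axioms.
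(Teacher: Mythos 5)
Your parts (1) and (3) coincide with the paper's proof: part (1) is associativity plus commutativity of $m$ (the paper merely adds the bookkeeping of the three index orderings $i<j<k$, $j<i<k$, $j<k<i$), and part (3) is exactly the reduction to Lemma~\ref{lem:delta m}. The problem is part (2), where you have a genuine gap.

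In part (2) you assert that after contracting $L_1$, the remaining loop $L_2$ ``sits at one of these slots'' and that contracting it ``applies $\delta$ a second time,'' so that the two orders give $(\delta\tensor id)\delta(v_i)$ versus $(id\tensor\delta)\delta(v_i)$ and coassociativity finishes the argument. This is true only when the half-edges of $L_1$ and $L_2$ do not interleave in the cyclic order at Vertex $i$. When they do alternate around the vertex --- precisely the configuration in which the two loops together form a handle --- contracting $L_1$ splits Vertex $i$ into two vertices and $L_2$, now having one end at each of them, becomes a \emph{straight edge} joining the two new vertices. The second contraction is then an instance of ECA 1, not ECA 2: it multiplies the two tensor factors of $\delta(v_i)$ back together, so either order yields $m\circ\delta(v_i)$ placed at a single vertex of the same graph in $\Gam_{g-1,n}$. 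No iterated coproduct appears in this case, and coassociativity is not the identity at stake; what one checks is simply that both orders produce the identical expression and the identical graph. The paper's proof splits part (2) into exactly these two cases, (a) the second loop remains a loop (coassociativity, with cocommutativity absorbing the slot labels) and (b) the second loop becomes an edge ($m\circ\delta(v_i)$ in either order). You correctly flagged the ribbon-structure bookkeeping as the hard point, but your proposed resolution of it --- that every configuration reduces to an iterated coproduct --- is wrong for the interleaved case, so the proof as written does not go through.
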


\begin{proof}
(1) There are three possible cases: $i<j<k$,
$j<i<k$, and $j<k<i$. In each case, the 
result is replacing $v_i$ by $v_iv_jv_k$, and
removing two vertices. The associativity 
and commutativity of the
multiplication of $A$ make the result of different
contractions the same.

\smallskip
(2) There are two cases here: After the contraction 
of one of the loops, (a) the other loop remans to be a
loop, or (b) becomes an edge connecting the two 
vertices created by the contraction of the first loop.

In the first case (a), the contraction of the two 
loops makes Vertex $i$ in $\gam$ into
three different vertices $i_1,i_2,i_3$ of the
resulting graph $\gam'$, which may be disconnected.
The loop contractions in the two different orders 
produce triple tensor products
$$
(1\tensor \delta)\delta(v_i) = (\delta\tensor 1)\delta(v_i),
$$
which are equal by the coassociativity
\begin{equation*}
		\xymatrix{
&A\tensor A \ar[dr]^{1\tensor \delta}&\\
A\ar[ur]^{\delta} \ar[dr]_{\delta}& & 
A\tensor A\tensor A .\\
		&A\tensor A\ar[ur]_{\delta\tensor 1}
		}
\end{equation*} 
For (b), the contraction of the loops in either order will produce
$m\circ \delta(v_i)$ on the same $i$-th slot of
the same graph $\gam'\in \Gam_{g-1,n}$.

\smallskip
(3) This amounts to proving the equation
$$
\delta(v_iv_j) = (1\tensor m)\big(\delta(v_i),v_j\big)
= (m\tensor 1)\big(v_j,\delta(v_i)\big),
$$
which is Lemma~\ref{lem:delta m}.
\end{proof}

\begin{rem}
If we have a system of 
subsets $\Gam'_{g,n}\subset \Gam_{g,n}$ 
for all $(g,n)$
that is closed under the edge-contraction operations,
then all statements of this section still hold by
replacing $\Gam_{g,n}$ by $\Gam'_{g,n}$.  
\end{rem}

\begin{rem}
Chen \cite{Chen} proved the graph independence
for a special case of $A = Z\bC[S_3]$, the center
of the group algebra for symmetric group $S_3$, by
direct computation. This result led the authors to 
find a general proof of 
Theorem~\ref{thm:independence}.
\end{rem}

The edge-contraction operations are associated
with gluing morphisms of $\Mbar_{g,n}$ 
that are different
from those  in \eqref{gl1} and \eqref{gl2}.
ECA 1 of \eqref{ECA1} is associated
with
\be
\label{alpha}
\a : \Mbar_{0,3}\times \Mbar_{g,n-1}\lrar
\Mbar_{g,n}.
\ee
The handle cutting case of ECA 2 of \eqref{ECA2-1}
is associated with
\be
\label{beta1}
\b_1 : \Mbar_{0,3}\times \Mbar_{g-1,n+1}
\lrar \Mbar_{g,n},
\ee
and the separating loop contraction with
\be
\label{beta2}
\b_2 : \Mbar_{0,3}\times \Mbar_{g_1,|I|+1}
\times \Mbar_{g_2,|J|+1}
\lrar \Mbar_{g_1+g_2,|I|+|J|+1}.
\ee
Although there are no cell graph operations that
are directly associated with the forgetful morphism
$\pi$ and the gluing maps $gl_1$ and $gl_2$, 
there is an operation on cell graphs similar to the
\emph{connected sum}
of topological surfaces. 

\begin{Def}[Connected sum of cell graphs]
Let $\gam'$ be a  cell graph
with the following conditions.
\begin{enumerate}
\item There is a vertex $q$ in $\gam'$ of degree
$d$.
\item There are $d$ distinct edges incident to $q$. 
In particular, none of them is a loop.
\item There are exactly $d$ faces in $\gam'$
incident to $q$.
\end{enumerate}

 Given an arbitrary cell graph $\gam$ with a degree
 $d$ vertex $p$, 
 we can create a
new cell graph $\gam \#_{(p,q)} \gam'$, which we
call the \emph{connected sum}
of $\gam$ and $\gam'$. 
The procedure is the following. We label 
all half-edges incident to $p$ with 
$\{1,2,\dots,d\}$ according to the 
cyclic order of the cell graph $\gam$ at $p$. 
We also label all edges incident to $q$ in 
$\gam'$ with $\{1,2,\dots,d\}$, but this time
opposite to the cyclic oder given to 
$\gam'$ at $q$. Cut a small disc around
$p$ and $q$, and connect all half-edges 
according to the labeling. The result is a
cell graph $\gam \#_{(p,q)} \gam'$. 
\end{Def}

\begin{rem} The connected sum construction can
be applied to
two distinct vertices $p$ and $q$ of the same
graph,  provided
that these vertices satisfy the required conditions.
\end{rem}

\begin{rem}
The total number of vertices decreases by $2$ 
in the connected sum. Therefore, two $1$-vertex
graphs cannot be connected by this construction.
\end{rem}

The connected sum construction provides
the inverse of the edge-contraction operations
as the following diagrams show. It is also 
clear from these figures that the edge-contraction 
operations are degeneration of curves producing 
a rational curve with three special points,
as indicated in Introduction.

\begin{figure}[htb]
\includegraphics[height=0.8in]{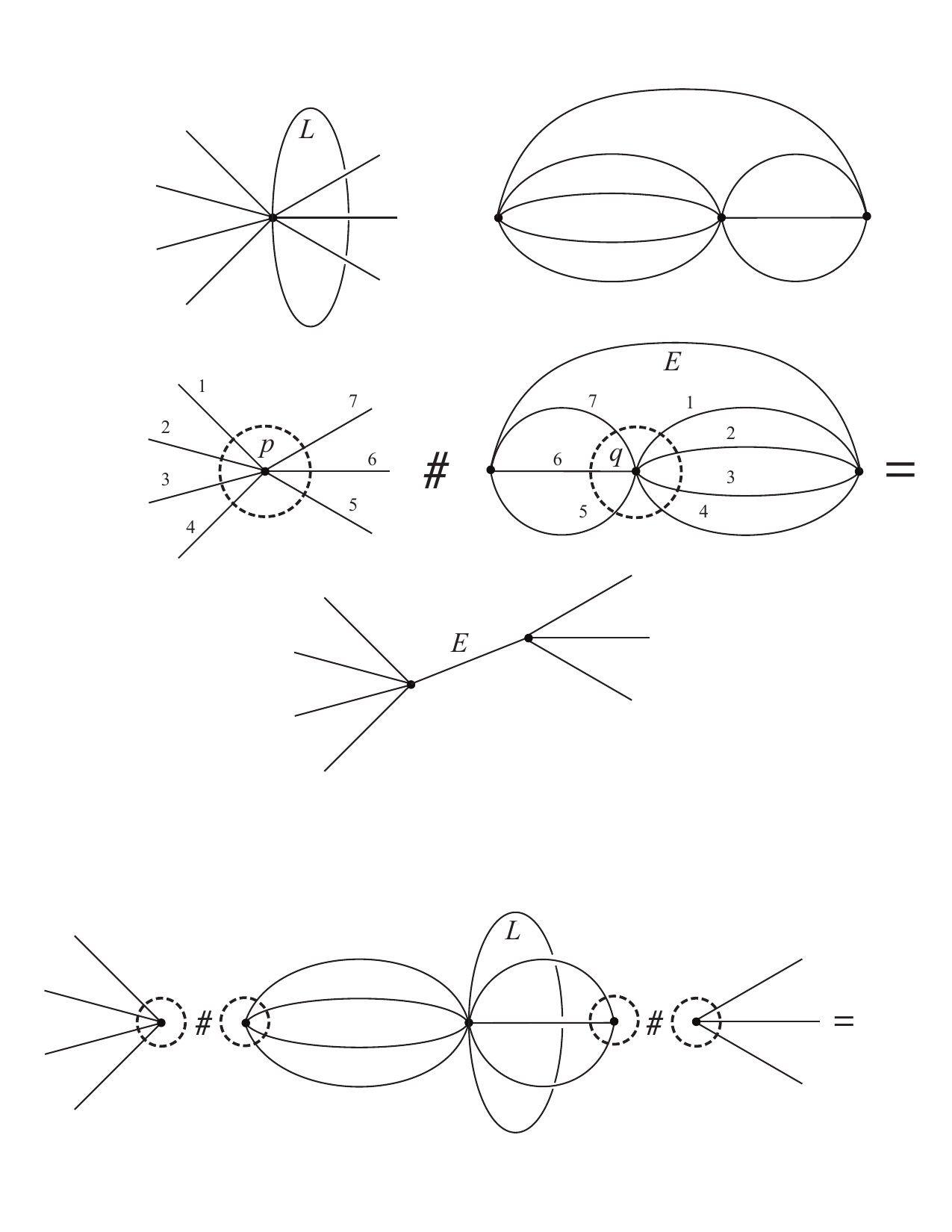}
\includegraphics[height=0.8in]{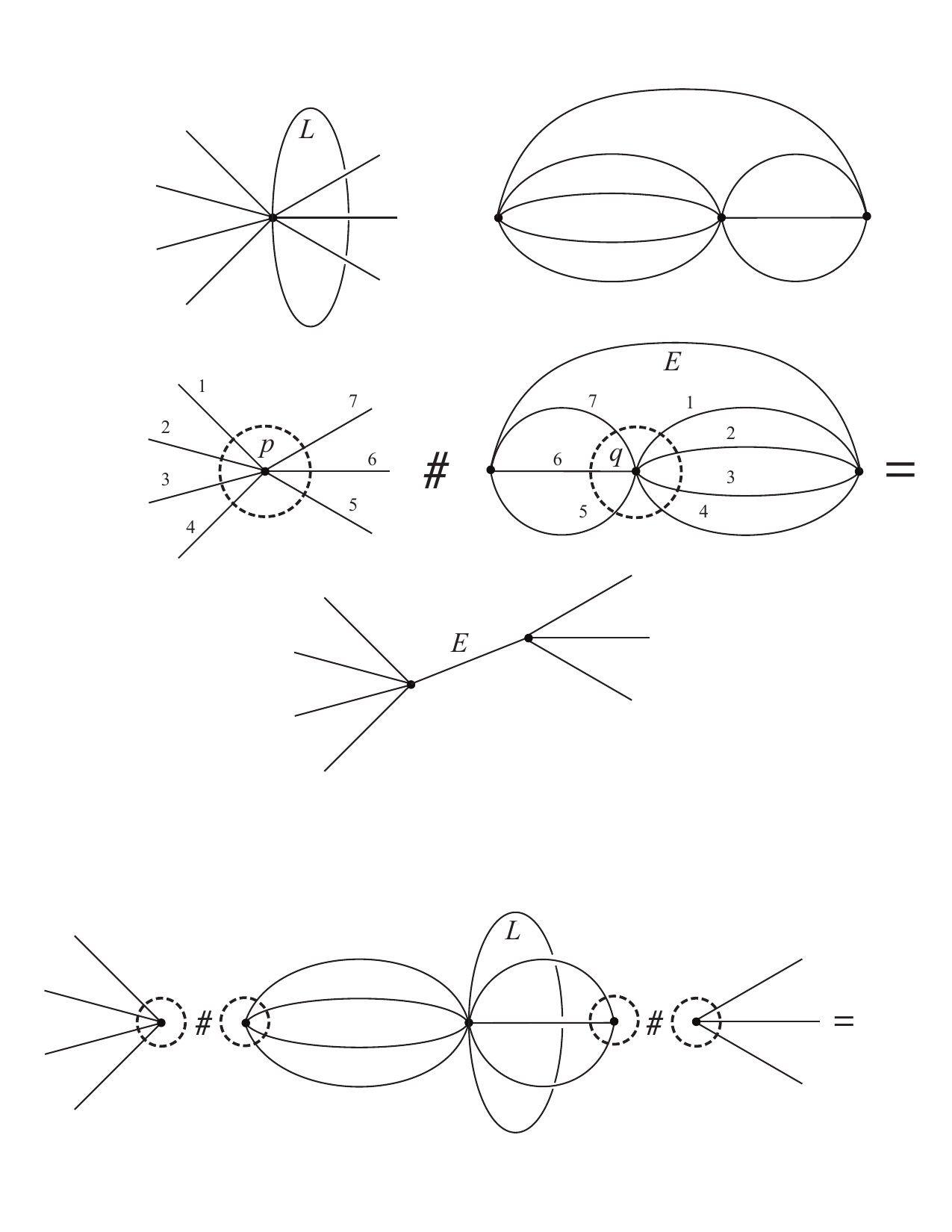}
\caption{The connected sum of a cell graph with 
a particular type $(0,3)$ cell graph  gives the inverse 
of the edge-contraction operation on $E$
 that connects two 
distinct vertices. The connected sum
with the $(0,3)$ piece has to be done so that the
edges incidents on each side of $E$ match
the original graph.
}
\end{figure}

\begin{figure}[htb]
\includegraphics[height=1in]{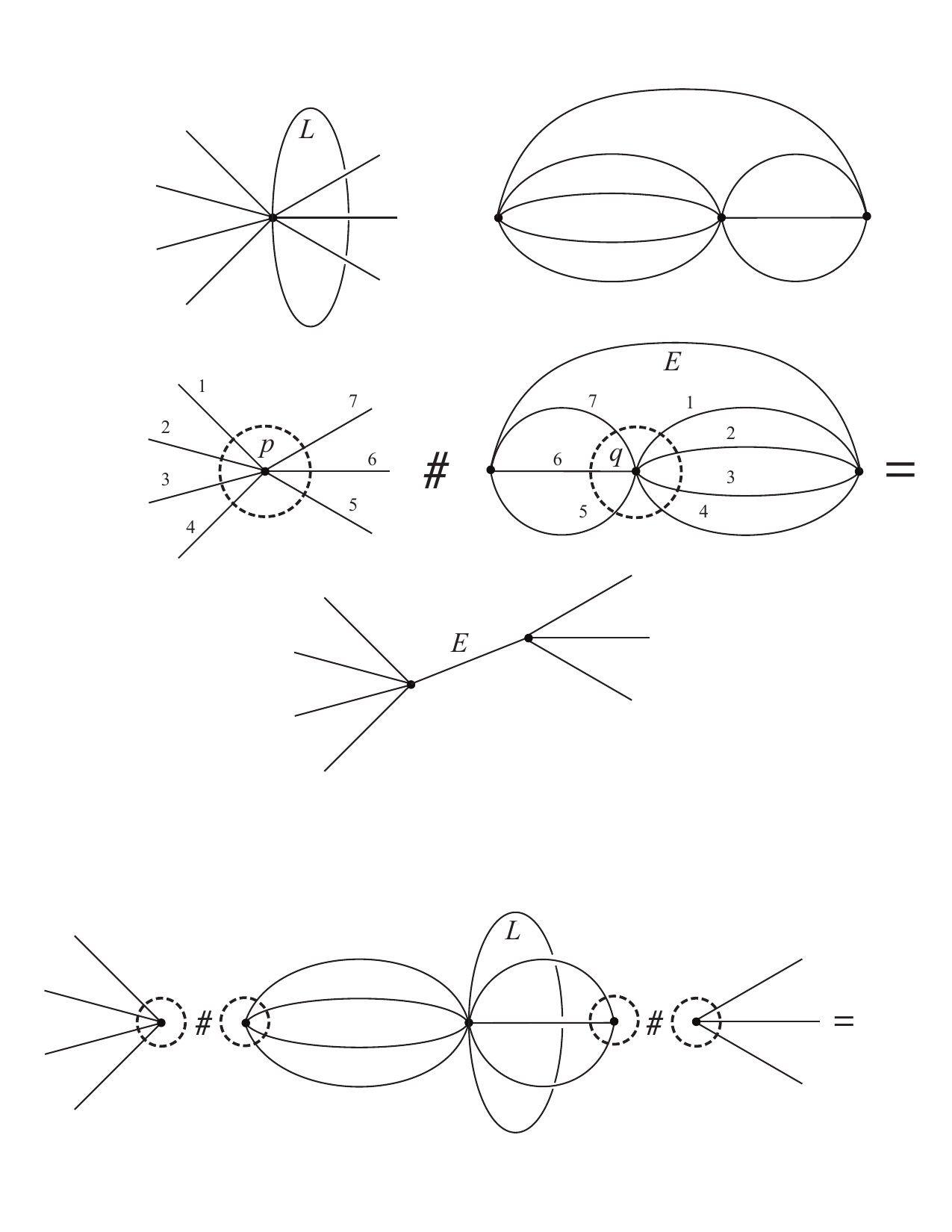}
\includegraphics[height=1in]{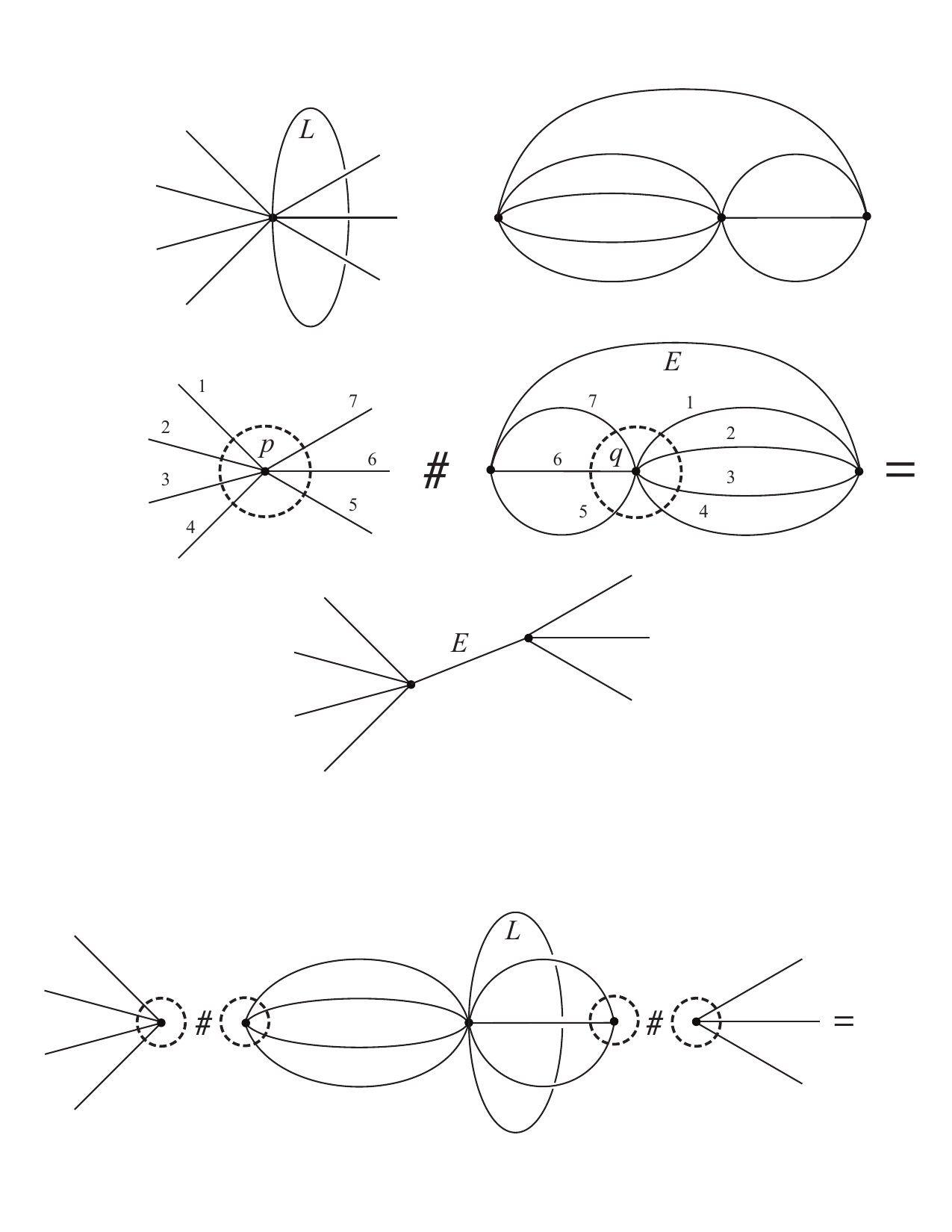}
\caption{The edge-contraction operation on
a loop $L$ 
is the inverse of two connected sum operations,
with a type $(0,3)$ piece in the middle.}
\end{figure}


\section{Category of cell graphs
and Frobenius ECO functors}
\label{sect:category}

In the previous section, we started from 
a Frobenius algebra $A$ and constructed 
the corresponding TQFT  through
edge-contraction axioms. The key step is the
assignment of the linear 
map $\Omega(\gam):A^{\tensor n}
\lrar K$ to each cell graph 
$\gam\in\Gam_{g,n}$. 
As we have noticed, edge-contraction 
operations encode the structure of 
a Frobenius algebra. These considerations 
suggest that cell graphs are functors, and 
edge-contraction operations are natural 
transformations. 
In this section,
we define the category of cell graphs, 
and define Frobenius ECO functors, which make
edge-contraction operations correspond
to natural transformations.

Let $(\cC,\tensor,K)$ be a monoidal 
category with a bifunctor 
$\tensor: \cC\times \cC \lrar \cC$ and its
left and right identity object  $K\in Ob(\cC)$. 
The example we keep in mind is 
the monoidal category $(\Vect,\tensor, K)$
 of vector spaces
defined over a field $K$ with the vector space
tensor product operation. 
Fore brevity, we call the bifunctor 
$\tensor$ just as a tensor product.
A $K$-object in 
$\cC$ is a pair $(V,f:V\lrar K)$ consisting
of an object $V$ and a morphism 
$f:V\lrar K$. We denote by $\cC/K$ the category
of $K$-objects in $\cC$. A $K$-morphism
$h:(V_1,f_1) \lrar (V_2,f_2)$ is 
a morphism $h:V_1\lrar V_2$ in $\cC$
that satisfies the commutativity
\be\label{K-morphism}
\begin{CD}
V_1@>f_1>> K\\
@VhVV @|\\
V_2@>>f_2> K
\end{CD}.
\ee
We note that every morphism  
$h:V_1\lrar V_2$ in $\cC$
yields a new object $(V_1,f_1)$
from a given $(V_2,f_2)$ as in 
\eqref{K-morphism}. 
This is  the \emph{pull-back}
object. 
The category $\cC/K$ itself is  
a monoidal category
with  respect to the tensor product, 
and the final object 
 $(K,id_K:K\lrar K)$ of $\cC/K$
as its identity object.

We denote by $\cF un(\cC/K,\cC/K)$ 
the \textbf{endofunctor category},
 consisting of monoidal functors 
$$
\a:\cC/K \lrar \cC/K
$$ 
as its objects. Let $\a$ and $\b$ be two endofunctors,
and $\tau$ a natural transformation between them.
Natural transformations form  morphisms
in the endofunctor category. 
$$
\xymatrix{
V\ar[dd]_h\ar[dr]^f && \a(V) 
\ar[dd]_{\a(h)}\ar[dr]^{\a(f)} \ar[rr]^\tau 
&&\b(V)\ar[dd]
\ar[dr]^{\b(f)}
\\
&K&&K\ar[rr]^{\!\!\!\!\!\!\!\!\!\!\!\!\!\!\!\!\tau}&&K
\\
W \ar[ur]_g &&\a(W) \ar[ur]_{\a(g)} 
\ar[rr]_\tau&&\b(W)\ar[ur]_{\b(g)}
}
$$
The final object of
$\cF un(\cC/K,\cC/K)$ is the functor 
\be\label{phi}
\phi: (V,f:V\lrar K) \lrar (K,id_K:K\lrar K)
\ee
which assigns the final object of the codomain
$\cC/K$ to 
everything in the domain $\cC/K$. 
With respect to the tensor product and the above
functor $\phi$ as its identity object,
the endofunctor category $\cF un(\cC/K,\cC/K)$
is again a monoidal category.

\begin{Def}[Subcategory generated by $V$]
\label{def:TV}
For every choice of an object $V$ of $\cC$,
we define a category of $K$-objects
$\cT(V^\bullet)/K$  as the 
 full subcategory of $\cC/K$
 whose objects are
$(V^{\tensor n},f:V^{\tensor n}\lrar K)$, 
$n=0,1,2,\dots .$ We call $\cT(V^\bullet)/K$
the \textbf{subcategory generated by} $V$ in 
$\cC/K$.
\end{Def}

\begin{Def}[Monoidal category of cell graphs]
\label{def:CG}
The  finite coproduct (or cocartesian) 
\textbf{monoidal
category of cell graphs} 
$\cC\cG$ is defined as follows. 
\begin{itemize}
\item The set of objects $Ob(\cC\cG)$ consists of a
finite disjoint union of cell graphs. 
\item The coproduct in $\cC\cG$ is the 
disjoin union, and the coidentity object is the empty
graph.
\end{itemize}
 The set of morphism $\Hom(\gam_1,\gam_2)$
from a cell graph $\gam_1$ to $\gam_2$ consists
of equivalence classes of sequences of 
edge-contraction operations and graph automorphisms.
For brevity of notation, if $E$ is an edge
connecting two distinct vertices of $\gam_1$, then
we simply denote by $E$ itself as the 
edge-contraction operation shrinking $E$, 
as in Figure~\ref{fig:ECA1}. If $L$
is a loop in $\gam_1$, then we denote by 
$L$ the edge-contraction operation 
of Figure~\ref{fig:ECA2}. 
Let
$$
\widetilde{\Hom}(\gam_1,\gam_2)
=\left\{\begin{matrix}
\text{composition of 
a sequence of edge-contractions}\\
\text{ 
and graph automorphisms that changes 
$\gam_1$ to $\gam_2$}
\end{matrix}
\right\}.
$$
This is the set of words consisting of
edge-contraction operations and graph 
automorphisms that change $\gam_1$
to $\gam_2$ when operated consecutively. 
If there is no such operations,
then we define $\widetilde{\Hom}(\gam_1,\gam_2)$
to be the empty set.
The morphism set 
$\Hom(\gam_1,\gam_2)$ is the 
set of equivalence
classes of $\widetilde{\Hom}(\gam_1,\gam_2)$.
The equivalence relation in 
the extended morphism set is generated
by the following cases of equivalences.

\begin{enumerate}

\item Suppose $\gam_1$ has a non-trivial 
automorphism $\sigma$. Then for every 
edge $E$ of $\gam_1$, $E$ and $\sigma(E)$
are equivalent.

\item
Suppose Vertex $i$ of 
$\gam_1\in \Gam_{g,n}$
 is connected to two distinct
vertices Vertex $j$ and Vertex $k$ by two edges, 
$E_j$ and $E_k$.
 The graph we obtain, denoted as
$\gam_2\in \Gam_{g,n-2}$, by
first contracting $E_j$ and then contracting
$E_k$, is the same as contracting the edges in the
opposite order. The two words $E_1E_2$ and
$E_2E_1$ are equivalent.

\item Suppose two loops $L_1$ and $L_2$
of $\gam_1$ 
are connected to Vertex $i$. Then the contraction 
operations 
of the two loops in different orders give the same
result. The two words 
$L_1L_2$ and
$L_2L_1$ are equivalent.

\item Suppose  a loop $L$ and a 
straight edge $E$ in $\gam_1$ are 
attached to Vertex $i$, 
where $E$ connects to Vertex $j$, $i\ne j$. 
Then contracting $L$ first and followed by 
contracting $E$, gives the same result as we
contract $L$ and $E$ in the other way around.
The two words $EL$ and $LE$ are
equivalent.

\item Suppose $\gam_1$ has two edges 
(including loops) $E_1$ and $E_2$ that have
no common vertices, and $\gam_2$ is obtained 
by contracting them. Then $E_1E_2$ is equivalent
to $E_2E_1$. 

\item Suppose two edges $E_1$ and $E_2$
are both incident to two distinct vertices.
Then $E_1E_2$ is equivalent to
$E_2E_1$. 
\end{enumerate}
\end{Def}

\begin{ex}
A few simple examples of morphisms 
are given below.   
\begin{align*}
\Hom(\bullet\!\! \frac{\;E_1\;}{}\!\!\!\bullet
\!\! \frac{E_2\;\;}{}\!\!\!\bullet, 
\bullet\!\! \frac{\phantom{\;E_1\;}}{}\!\!\!\bullet)
&=
\{E_1,E_2\},
\\
\Hom(\bullet\!\! \frac{\;E_1\;}{}\!\!\!\bullet
\!\! \frac{E_2\;\;}{}\!\!\!\bullet, 
\bullet)
&=
\{E_1E_2\},
\\
\Hom\left(
\underset{E_2}{\overset{E_1}{\bullet\!\!\!\bigcirc\!\!\!\bullet}},\bullet\!\!\bigcirc\right)
&= \{E_1\} = \{\sigma(E_1)\} = \{E_2\},
\\
\Hom\left(
\underset{E_2}{\overset{E_1}{\bullet\!\!\!\bigcirc\!\!\!\bullet}},\bullet\;\;\bullet\right)
&= \{E_1E_2\}.
\end{align*}
\end{ex}

The cell graph on the left of the third and fourth lines 
has an automorphism $\sigma$
that interchanges $E_1$
and $E_2$. Thus as the edge-contraction operation,
$E_2=E_1\circ \sigma = \sigma(E_1)$.

\begin{rem}
If $\gam\in \Gam_{g,n}$, then 
$\Hom(\gam,\gam) = \{id_\gam\}$.
\end{rem}

We have seen in the last section that
when we have made a choice of a
unital commutative Frobenius algebra
$A$, a cell graph $\gam\in \Gam_{g,n}$
defines a multilinear map
$\Omega_A(\gam):A^{\tensor n} \lrar K$ subject to 
edge-contraction axioms. 
For a different Frobenius algebra $B$, 
we have a different multilinear map 
$\Omega_B(\gam):B^{\tensor n} \lrar K$,
subject to the same axioms. These two maps are
unrelated, unless we have a Frobenius
algebra homomorphism $h:A\lrar B$. 
Theorem~\ref{thm:independence} 
tells us that we have a $K$-morphism
of \eqref{K-morphism} which induces
$\Omega_A(\gam)$ as the pull-back of
$\Omega_B(\gam)$.
$$
\xymatrix{
A\ar[d]_h & A^{\tensor n} \ar[d] 
\ar[rr]^{\Omega_A(\gam)} 
&&K\ar[d]
\\
B  & B^{\tensor n} 
\ar[rr]_{\Omega_B(\gam)}&& K
}
$$
This consideration suggests that $\Omega(\gam)$
is a functor defined on the category of 
Frobenius algebras. But since we are 
encoding the Frobenius algebra structure
into the category of cell graphs, the 
extra choice of
Frobenius algebras is redundant.

We are thus led to the following definition.

\begin{Def}[Frobenius ECO functor]
\label{def:Frobfunctor}
An \textbf{Frobenius ECO functor}  is 
a   monoidal  functor 
\be\label{omega}
\omega:\cC\cG \lrar \cF un(\cC/K,  \cC/K)
\ee
satisfying the following conditions.
\begin{itemize}
\item The graph $\gam_0=\bullet$ of \eqref{ECA0}
of type $(0,1)$ consisting of only one vertex
and no edges corresponds to the identity 
endofunctor:
\be\label{bullet functor}
\bullet \lrar (id:\cC/K\lrar \cC/K).
\ee
\item A graph $\gam\in \Gam_{g,n}$ of type 
$(g,n)$ corresponds to a functor
\be\label{type gn}
\gam \longmapsto \left[
(V,f:V\lrar K) \lrar  (V^{\tensor n},
\omega_V(\gam):V^{\tensor n}\lrar K)\right].
\ee
\end{itemize}
The Frobenius ECO functor assigns to each 
edge-contraction operation 
a natural transformation of  
endofunctors $\cC/K \lrar  \cC/K$.
\end{Def}

\begin{rem}
The unique
construction of the Frobenius ECO functor
for $(\Vect,\tensor, K)$ requires us to generalize
our categorical setting
to include CohFT of Kontsevich-Manin
\cite{KM}. 
Then we will be able to show that 
this unique functor actually generates
all \emph{Frobenius objects}
 of  $(\Vect,\tensor, K)$.
This topic will be treated in our forthcoming 
paper.
\end{rem}

Let us consider the monoidal (not full)
subcategory 
$\cA \subset (\Vect, \tensor, K)$ consisting of 
 commutative Frobenius algebras. 

\begin{thm}[Construction of 2D TQFTs]
There is a canonical Frobenius ECO
functor
\be\label{Omega-category}
\Omega:\cC\cG \lrar \cF un(\cA/K,  \cA/K).
\ee
When we start with a Frobenius algebra
$A$, this functor generates
a network of multilinear maps
 $$
 \Omega_A(\gam):A^{\tensor n}\lrar K
 $$
for all cell graphs $\gam\in \Gam_{g,n}$
for all values of $(g,n)$. This is the 
2D TQFT corresponding to 
the Frobenius algebra $A$. 
\end{thm}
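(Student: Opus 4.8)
The plan is to construct the functor $\Omega$ explicitly on objects and morphisms of $\cC\cG$ and then verify functoriality, monoidality, and the two defining conditions of a Frobenius ECO functor (Definition~\ref{def:Frobfunctor}). On objects, to each cell graph $\gam\in\Gam_{g,n}$ I assign the endofunctor
\[
\Omega(\gam):\cA/K\lrar\cA/K,\qquad
(A,\epsilon)\longmapsto\big(A^{\tensor n},\Omega_A(\gam)\big),
\]
where $\Omega_A(\gam):A^{\tensor n}\to K$ is the multilinear map produced by the edge-contraction axioms. By Theorem~\ref{thm:independence} this map is well-defined and equals $v_1\tensor\cdots\tensor v_n\mapsto\epsilon(v_1\cdots v_n\mathbf{e}^g)$; in particular it depends only on the type $(g,n)$ and is symmetric. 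I would check that $\Omega(\gam)$ is genuinely an endofunctor: for a morphism $h:(A,\epsilon_A)\to(B,\epsilon_B)$ of $\cA/K$ the map $h^{\tensor n}$ intertwines $\Omega_A(\gam)$ and $\Omega_B(\gam)$, because $h$ preserves products, the counit, and hence the Euler element $\mathbf{e}$; this is exactly the pull-back square displayed just before Definition~\ref{def:Frobfunctor}.

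On morphisms, a generating edge-contraction $E:\gam\to\gam'$ is sent to the natural transformation whose component at $(A,\epsilon)$ is the structural map of $A$ dictated by the relevant axiom: multiplication in the two slots $i,j$ for ECA~1, comultiplication in the $i$-th slot for the handle case of ECA~2, and the basis-expanded comultiplication $\sum_{a,b,k,\ell}\eta(v_i,e_ke_\ell)\eta^{ka}\eta^{\ell b}\,e_a\tensor e_b$ for the separating case. The content of ECA~1 and ECA~2 is precisely that these components are $K$-morphisms, i.e.\ $\Omega_A(\gam')\circ\tau_A=\Omega_A(\gam)$, so the defining triangle of a natural transformation commutes; naturality in $A$ again follows because $m$ and $\delta$ are preserved by Frobenius homomorphisms. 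A general morphism of $\cC\cG$ is a word in such operations and graph automorphisms, and I define $\Omega$ on it by composing the corresponding natural transformations.

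The main obstacle, and the real content of the theorem, is showing that this assignment on morphisms is \emph{well-defined}, i.e.\ that it descends from $\widetilde{\Hom}$ to $\Hom$ by respecting the six equivalence relations of Definition~\ref{def:CG}. Relation~(1), invariance under graph automorphisms, follows from the symmetry of $\Omega_A(\gam)$ guaranteed by Theorem~\ref{thm:independence}. Relations~(2)--(6), the commutations of pairs of edge-contractions, are exactly Proposition~\ref{prop:commutativity}: associativity and commutativity of $m$ cover two straight edges, coassociativity and cocommutativity of $\delta$ cover two loops, and Lemma~\ref{lem:delta m} covers a loop together with an incident straight edge. Once well-definedness holds, functoriality (composition and identities, using that $\Hom(\gam,\gam)=\{id_\gam\}$) and monoidality (disjoint union of graphs $\leftrightarrow$ tensor product of endofunctors, empty graph $\leftrightarrow$ the unit functor $\phi$ of \eqref{phi}) are routine verifications.

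Finally I would check the two conditions of Definition~\ref{def:Frobfunctor}: the one-vertex graph $\bullet\in\Gam_{0,1}$ maps to the identity endofunctor, since $\Omega_A(\bullet)=\epsilon$ by ECA~0, and a graph of type $(g,n)$ maps to the functor of \eqref{type gn} by construction. That the resulting network $\{\Omega_A(\gam)\}$ is the 2D TQFT attached to $A$ is then immediate from Corollary~\ref{cor:ECA=TQFT}, which identifies $\Omega_A(\gam)(v_1,\dots,v_n)=\epsilon(v_1\cdots v_n\mathbf{e}^g)=\Omega_{g,n}(v_1,\dots,v_n)$. Canonicity means no data were used beyond the edge-contraction axioms themselves.
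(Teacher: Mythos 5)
Your proposal is correct and follows essentially the same route as the paper: the paper's own proof is a one-line appeal to the graph independence of Theorem~\ref{thm:independence}, which is exactly the key ingredient you use. What you have done is fill in the details the paper leaves implicit --- well-definedness on morphisms via Proposition~\ref{prop:commutativity} (which the paper explicitly introduces as motivation for the functor), and the identification of the resulting network with the 2D TQFT via Corollary~\ref{cor:ECA=TQFT}.
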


\begin{proof}
This follows from the graph independence of 
Theorem~\ref{thm:independence}. 
\end{proof}


\begin{ack}
The authors  are grateful to  
the American Institute of Mathematics in California, 
the Banff International Research Station,
 the Institute for Mathematical
Sciences at the National University of Singapore,
 Kobe University, Leibniz Universit\"at 
 Hannover, the Lorentz Center for Mathematical 
 Sciences, Leiden, 
 Max-Planck-Institut f\"ur Mathematik in Bonn, 
 Mathematisches Forschungsinstitut Oberwolfach, 
 and Institut Henri Poincar\'e,
for their hospitality and financial support during
the authors' stay for collaboration related to the
subjects of this paper.
They   thank Ruian Chen,
Maxim Kontsevich and Shintaro Yanagida
for valuable discussions. They also thank
the referee for useful comments in improving
the manuscript.
O.D.\ thanks the Perimeter Institute for 
Theoretical Physics, and
M.M.\  thanks the Hong Kong University of 
Science and Technology and
the Simons Center for Geometry 
and Physics,
 for
financial support and 
hospitality. 
During the preparation of this work,
the research of O.D.\ has been supported by
 GRK 1463 \emph{Analysis,
Geometry, and String Theory} at 
Leibniz Universit\"at 
 Hannover, and Max-Planck-Institut
 f\"ur Mathematik, Bonn. 
The research of M.M.\ has been supported 
by 
NSF grants DMS-1104734, DMS-1309298, 
DMS-1619760, DMS-1642515,
and NSF-RNMS: Geometric Structures And 
Representation Varieties (GEAR Network, 
DMS-1107452, 1107263, 1107367).
\end{ack}


\providecommand{\bysame}{\leavevmode\hbox to3em{\hrulefill}\thinspace}

\bibliographystyle{amsplain}

\begin{thebibliography}{10}


\bibitem{Abrams}
L.~Abrams, 
\emph{Two-dimensional
topological quantum field theories and
Frobenius algebras},
Journal of Knot Theory and Ramifications
\textbf{5}, 335--352 (2000).

\bibitem{Atiyah}
M.F.~Atiyah,
\emph{Topological quantum field theory},
 	[arXiv:hep-th/0312085], 
Publications Math\'ematiques de l'I.H.\'E.S.,
\textbf{68}, 175--186 (1988).



\bibitem{Belyi}
G.~V.~Belyi, \emph{On galois extensions of a maximal cyclotomic fields}, Math.\
  U.S.S.R.\ Izvestija \textbf{14}, 247--256  (1980).



   
 
 






\bibitem{Chen}
R.~Chen,
\emph{Topological quantum field
theory from the viewpoint of cellular graphs},
Senior Thesis, University of California, Davis
(2015).


\bibitem{Dijkgraaf}
R.~Dijkgraaf,
\emph{A geometric approach to two-dimensional
conformal field theory},
Ph.D.\ Thesis, University of Utrecht (1989).


\bibitem{DVV}
R.~Dijkgraaf, E.~Verlinde, and H.~Verlinde,
\emph{Loop equations and {V}irasoro constraints in non-perturbative two-dimensional quantum gravity}, Nucl. Phys.
\textbf{B348}, 435--456 (1991).




\bibitem{OM1}
O.~Dumitrescu and M.~Mulase,
\emph{Quantum curves for Hitchin fibrations
and the Eynard-Orantin theory},
Lett.\ Math.\ Phys.\ \textbf{104}, 635--671 (2014).

\bibitem{OM2}
O.~Dumitrescu and M.~Mulase,
\emph{Quantization of spectral  curves for 
meromorphic Higgs bundles through
topological recursion},
 arXiv:1411.1023 (2014).
 
 \bibitem{OM4}
 O.~Dumitrescu and M.~Mulase,
\emph{Lectures on the topological recursion for Higgs bundles and quantum curves},
to appear in the lecture notes series of
the National University of Singapore. 

\bibitem{OM5}
O.~Dumitrescu and M.~Mulase,
\emph{Interplay between opers, quantum curves, WKB analysis, and Higgs bundles},
 arXiv:1702.00511 (2017).
 

\bibitem{OM7}
O.~Dumitrescu and M.~Mulase,
\emph{Mirror curve of orbifold Hurwitz numbers},
preprint.


\bibitem{DMSS}
O.~Dumitrescu, M.~Mulase,  A.~Sorkin and
B.~Safnuk,
\emph{The spectral curve of the Eynard-Orantin 
recursion via the Laplace transform},
in ``Algebraic and Geometric Aspects of Integrable 
Systems and Random Matrices,'' 
Dzhamay, Maruno and Pierce, Eds. 
Contemporary Mathematics 
\textbf{593}, 263--315 (2013).




 
\bibitem{EO2007}
  B.~Eynard and N.~Orantin,
\emph{Invariants of algebraic curves and 
topological expansion},
Communications in Number Theory
and Physics {\bf 1},  347--452 (2007).

\bibitem{Gro}
A.~Grothendieck,
\emph{Esquisse d'un programme}, (1984).


\bibitem{Harer}
J.~L. Harer, \emph{The cohomology of the moduli space of curves},
in Theory of
  Moduli, Montecatini Terme, 1985 (Edoardo Sernesi, ed.), Springer-Verlag,
  1988, pp.~138--221.

 

  
\bibitem{Kock}
J.~Kock,
\emph{Frobenius algebras and $2D$ topological
quantum field theories},
Cambridge University Press (2003).
  
 
 \bibitem{K1992}
M.~Kontsevich,
\emph{Intersection theory on
the moduli space of
curves and the
matrix {Airy} function},
  Communications in Mathematical Physics
 \textbf{147}, 1--23  (1992).

\bibitem{KM}
M.~Kontsevich and Y.I.~Manin, 
\emph{Gromov-Witten classes, quantum 
cohomology, and enumerative geometry},
 Communications in Mathematical Physics
 \textbf{164}, 525--562 (1994),
 
 
\bibitem{Mir1} M.~Mirzakhani, 
\emph{Simple geodesics and Weil-Petersson 
volumes of moduli spaces of bordered Riemann 
surfaces},
Invent. Math. {\bf 167}, 179--222 (2007).

\bibitem{Mir2} M.~Mirzakhani, 
\emph{Weil-Petersson volumes and intersection 
theory on the moduli space
of curves}, J. Amer. Math. Soc. {\bf 20}, 1--23 (2007).




\bibitem{MP1998}
 M.~Mulase and M.~Penkava, \emph{
 Ribbon graphs, quadratic differentials on Riemann surfaces, and algebraic curves defined over $\overline{\mathbb{Q}}$}, The Asian Journal of Mathematics  \textbf{2} (4), 875--920 (1998).
 
 

\bibitem{MP2012}
M.~Mulase and M.~Penkava,
\emph{Topological recursion for the Poincar\'e 
polynomial of the combinatorial moduli 
space of curves}, 
 Advances in Mathematics \textbf{230}, 
 1322--1339 (2012).

\bibitem{MY}
M.~Mulase and J.~Yu,
\emph{Non-commutative matrix integrals and representation varieties of surface groups in a finite group},
Annales de l'Institut Fourier \textbf{55}, 1001--1036 (2005).


  \bibitem{Mumford} D.~Mumford,
  \emph{
  Towards an enumerative geometry of the moduli space of curves}
  (1983),
  in ``Selected Papers of David Mumford,'' 235--292 (2004).




  \bibitem{Segal}
  G.~Segal, 
  \emph{Geometric aspect of quantum field theory},
  Proceedings of the International Congress
  of Mathematicians, Kyoto, Japan
  1990, 1387--1396 (1991).
  
  \bibitem{STT}
D.~D. Sleator, R.~E. Tarjan, and W.~P. Thurston, \emph{Rotation
  distance, triangulations, and hyperbolic geometry}, Journal of the American
  Mathematical Society \textbf{1}, 647--681 (1988).


  
  
\bibitem{Strebel}
K.~ Strebel, \emph{Quadratic differentials}, Springer-Verlag, 1984.


  
  \bibitem{Teleman}
  C.~Teleman,
  \emph{The structure of 2D semi-simple field theories},
  Inventiones Mathematicae \textbf{188},
  525--588 (2012).
  


\bibitem{tH}
G.~'t Hooft,
\emph{A planer diagram theory for
strong interactions},
Nuclear Physics \textbf{B
72},
461--473 (1974).





\bibitem{W1991}
E.~Witten, \emph{Two dimensional gravity and
intersection
theory on moduli space}, Surveys in
Differential Geometry \textbf{1},
243--310 (1991).





\end{thebibliography}

\end{document}